\setlist[enumerate]{itemsep=0.5ex}
\theoremstyle{plain}
\newtheorem{theorem}{Theorem}[section]
\newtheorem{proposition}[theorem]{Proposition}
\newtheorem{lemma}[theorem]{Lemma}
\newtheorem{corollary}[theorem]{Corollary}
\newtheorem{question}[theorem]{Question}
\theoremstyle{definition} 
\newtheorem{definition}[theorem]{Definition}
\newtheorem*{claim*}{Claim}
\theoremstyle{remark} 
\newtheorem{remark}[theorem]{Remark}
\numberwithin{equation}{section}
\newcommand{\Sc}{\mathrm{Sc}}
\newcommand{\Bigwedge}{\mathord{\adjustbox{raise=.4ex, totalheight=.7\baselineskip}{$\bigwedge$}}}
\newcommand{\ind}{\textup{Ind}}
\newcommand{\id}{\mathrm{id}}
\newcommand{\R}{\mathbb{R}}
\newcommand{\tr}{\mathrm{tr}}
\newcommand{\sph}{\mathbb{S}}
\newcommand{\equator}{\mathbb E}
\newcommand{\T}{\mathbb{T}}
\newcommand{\dist}{\textup{dist}}
\newcommand{\SO}{\mathrm{SO}}
\newcommand{\codim}{\mathrm{codim}}
\newcommand{\interior}[1]{%
	{\kern0pt#1}^{\mathrm{\,o}}%
}
\let\save@mathaccent\mathaccent
\newcommand*\if@single[3]{%
	\setbox0\hbox{${\mathaccent"0362{#1}}^H$}%
	\setbox2\hbox{${\mathaccent"0362{\kern0pt#1}}^H$}%
	\ifdim\ht0=\ht2 #3\else #2\fi
}
\newcommand*\rel@kern[1]{\kern#1\dimexpr\macc@kerna}
\newcommand*\overbar[1]{\@ifnextchar^{{\wide@bar{#1}{0}}}{\wide@bar{#1}{1}}}
\newcommand*\wide@bar[2]{\if@single{#1}{\wide@bar@{#1}{#2}{1}}{\wide@bar@{#1}{#2}{2}}}
\newcommand*\wide@bar@[3]{%
	\begingroup
	\def\mathaccent##1##2{%
		\let\mathaccent\save@mathaccent
		\if#32 \let\macc@nucleus\first@char \fi
		\setbox\z@\hbox{$\macc@style{\macc@nucleus}_{}$}%
		\setbox\tw@\hbox{$\macc@style{\macc@nucleus}{}_{}$}%
		\dimen@\wd\tw@
		\advance\dimen@-\wd\z@
		\divide\dimen@ 3
		\@tempdima\wd\tw@
		\advance\@tempdima-\scriptspace
		\divide\@tempdima 10
		\advance\dimen@-\@tempdima
		\ifdim\dimen@>\z@ \dimen@0pt\fi
		\rel@kern{0.6}\kern-\dimen@
		\if#31
		\overline{\rel@kern{-0.6}\kern\dimen@\macc@nucleus\rel@kern{0.4}\kern\dimen@}%
		\advance\dimen@0.4\dimexpr\macc@kerna
		\let\final@kern#2%
		\ifdim\dimen@<\z@ \let\final@kern1\fi
		\if\final@kern1 \kern-\dimen@\fi
		\else
		\overline{\rel@kern{-0.6}\kern\dimen@#1}%
		\fi
	}%
	\macc@depth\@ne
	\let\math@bgroup\@empty \let\math@egroup\macc@set@skewchar
	\mathsurround\z@ \frozen@everymath{\mathgroup\macc@group\relax}%
	\macc@set@skewchar\relax
	\let\mathaccentV\macc@nested@a
	\if#31
	\macc@nested@a\relax111{#1}%
	\else
	\def\gobble@till@marker##1\endmarker{}%
	\futurelet\first@char\gobble@till@marker#1\endmarker
	\ifcat\noexpand\first@char A\else
	\def\first@char{}%
	\fi
	\macc@nested@a\relax111{\first@char}%
	\fi
	\endgroup
}
\begin{document}
	\title[Scalar curvature rigidity and $L^\infty$ metrics ]{Scalar curvature rigidity of spheres with subsets removed and $L^\infty$ metrics}
	
	\author{Jinmin Wang}
	\address[Jinmin Wang]{Institute of Mathematics, Chinese academy of sciences}
	\email{jinmin@amss.ac.cn}
	\thanks{}
	\author{Zhizhang Xie}
	\address[Zhizhang Xie]{ Department of Mathematics, Texas A\&M University }
	\email{xie@tamu.edu}
	\thanks{The second author is partially supported by NSF  1952693 and 2247322.}
	 
	\begin{abstract}
We prove the scalar curvature rigidity for $L^\infty$ metrics on $\mathbb S^n\backslash\Sigma$, where $\mathbb S^n$ is the $n$-dimensional sphere with $n\geq 3$ and $\Sigma$ is a closed subset of $\mathbb S^n$ of codimension at least $\frac{n}{2}+1$ that  satisfies the wrapping property. The notion of wrapping property was introduced by the second author for studying related scalar curvature rigidity problems on spheres. For example,  any closed subset of $\mathbb S^n$ contained in a hemisphere and any finite subset of $\mathbb S^n$ satisfy the wrapping property.  The same techniques also apply to prove an analogous scalar rigidity result for $L^\infty$ metrics on tori that are smooth away from certain subsets of codimension at  least $\frac{n}{2}+1$. As a corollary, we obtain a positive mass theorem for complete asymptotically flat spin manifolds with arbitrary ends for $L^\infty$ metrics. 
	\end{abstract}
	\maketitle

\section{introduction}

The scalar curvature rigidity problems play a central role in Riemannian geometry. A well known theorem of Llarull \cite{Llarull} states that if $g$ is a smooth Riemannian metric on $\sph^n$ such that $g\geq g_{\sph^n}$ and $\Sc_{g}\geq \Sc_{g_{\sph^n}} = n(n-1)$, then $g= g_{\sph^n}$, where $g_{\sph^n}$ is the standard round metric on $\sph^n$. More generally, Llarull showed that if $f\colon (M, g) \to (\sph^n, g_{\sph^n})$ is a nonzero degree area-non-increasing  map such that $\Sc_g \geq \Sc_{g_{\sph^n}}$, then $f$ is an isometry. This was later generalized by Goette and Semmelmann \cite{GoetteSemmelmann} to the scalar curvature rigidity of closed manifolds with nonnegative curvature operators and nonzero Euler characteristics. In more recent years, the theorems of Llarull and Goette-Semmelmann have been further generalized to manifolds with smooth boundary \cite{Lottboundary} and manifolds with corners (more generally manifolds with polyhedral boundaries) \cite{Wang:2021tq, Wang:2022vf}. 

In \cite[Section 10.9]{Gromovinequalities2018}, Gromov raised the following open question regarding the scalar curvature rigidity for spheres with some subset removed. 

\begin{question}
    Let $\Sigma$ be a closed subset of the standard round sphere $(\sph^n, g_{\sph^n})$.
Suppose $(X, g)$  is an orientable $n$-dimensional Riemannian manifold and 
\[ f\colon (X, g) \to (\sph^n\backslash \Sigma, g_{\sph^n}) \]
is a nonzero degree smooth proper map such that $f$ is area-non-increasing and 
\[   \Sc_g \geq \Sc_{g_{\sph^n}} = n(n-1).\]
Does it follow that $f$ is an isometry?
\end{question}

In the case where $\Sigma$ consists of two antipodal points,  if $\dim X = 3$, the above question was positively confirmed by Hu-Liu-Shi \cite{MR4594778} via Gromov's $\mu$-bubble approach and  Hirsch-Kazaras-Khuri-Zhang \cite{Hirsch:2022aa} via harmonic functions. For general dimensions, B\"ar-Brendle--Hanke--Wang \cite{Baer:2023aa} and the authors \cite{WangXie23} independently confirmed the above question for spin manifolds in all dimensions $(\geq 3)$ in the case of  $\Sigma$ consists of two antipodal points, via Dirac operator techniques.  More recently, Chu, Lee and Zhu answered positively the above question for spin manifolds in the case where $\Sigma$ is a finite set of points, under the additional assumption that $g$ is a $L^\infty$ metric \cite{ChuLeeZhu24}. More precisely, they proved the following theorem: if $M^n$ ($n\geq 3$) is closed spin manifold and $S$ is finite subset of $M$ so that $g$ is an $L^\infty$-metric on $M$ that is smooth outside $S$ with $\Sc_g \geq n(n-1)$ on $M\backslash S$, then any nonzero degree distance-non-increasing Lipschitz map $f\colon (M,g)\to (\sph^n,g_{\sph^n})$ that is  smooth away from $S$ has to be an isometry. 

A main purpose of the present paper is to answer positively the above question of Gromov for a wide class of subsets $\Sigma$, under the additional $L^\infty$ assumption on the metric $g$.
To be more precise, let us first recall the definition of $L^\infty$ Riemannian metrics.

\begin{definition}
	Let $M$ be a closed smooth manifold. Let $g_0$ be a smooth Riemannian metric on $M$. We say that a measurable section $g$ over $M$ with value in positive definite  symmetric $2$-tensor is an $L^\infty$-metric, if there exists $C>0$ such that
	$C^{-1}g_0\leq g\leq Cg_0$ holds almost everywhere.
\end{definition}

Here is the first main theorem of the paper.
\begin{theorem}\label{thm:sphere}
	Let $\Sigma$ be a finite simplicial complex embedded in $\sph^n$ ($n\geq 3$) of codimension $\geq 3$.   Let $M^n$  be a closed spin manifold and $g$  an $L^\infty$-metric on $M$. Let $f\colon (M,g)\to (\sph^n,g_{\sph^n})$ be a Lipschitz map with non-zero degree. Assume that  
	\begin{enumerate}[label=$(\arabic*)$]
	\item $S = f^{-1}(\Sigma)$ is a finite simplicial complex embedded in $M$ of codimension $k \geq\frac n 2+1$,
	\item the Riemannian metric $g$ on $M$ is smooth away from $S$,
	\item the map $f$ is smooth away from $S$, and 
	\item  $\Sigma\subset\sph^n$ has the  wrapping property.	
	\end{enumerate}
If we have \begin{equation}\label{eq:scalarcomparison}
	\Sc_{g}\geq |\Bigwedge^2df|\cdot \Sc_{g_{\sph^n}}=|\Bigwedge^2df|\cdot n(n-1)
\end{equation} on $M\backslash S$,  then $f$ is a metric isometry on $M$ up to scaling.\footnote{This means that there exists a constant $c>0$ such that $d_{g_{\sph^n}}(f(x), f(y)) = c\cdot d_g(x, y)$ for all $x, y\in M$. In particular, $f$ is a Riemannian isometry on $M\backslash S$ after scaling. } Here $|\Bigwedge^2df_x|$ is the norm of the map 
\[ \Bigwedge^2df_x\colon \Bigwedge^2 T_xM \to \Bigwedge^2 T_{f(x)}\sph^n.\] 
\end{theorem}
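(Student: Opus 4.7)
My plan is to combine three ingredients: the Dirac-operator approach to the wrapping property (from the authors' earlier work on the two-antipodal-points case), the smoothing technique for $L^\infty$ metrics with low-dimensional singular set (in the spirit of Chu--Lee--Zhu), and a capacity-style argument that uses the codimension assumption $k\geq \frac{n}{2}+1$ to kill error terms in the Lichnerowicz integral.

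The first step is to reduce to smooth data. Using the smooth reference metric on $M$, I would mollify $g$ on a tubular neighborhood $U_\epsilon(S)$ to produce smooth metrics $g_\epsilon$ on $M$ with $g_\epsilon = g$ outside $U_\epsilon(S)$ and with $C^{-1}g_0\leq g_\epsilon\leq Cg_0$ uniformly. I would similarly smooth $f$ to $f_\epsilon$. The scalar curvature bound $\Sc_{g_\epsilon}\geq |\Bigwedge^2 df_\epsilon|\cdot n(n-1)$ will only survive away from $U_\epsilon(S)$; inside, one loses control but gains localization on a set whose codimension is $\geq \frac n 2+1$.

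The second step is to run the wrapping-property Dirac argument of the authors for $(M,g_\epsilon,f_\epsilon)$, restricted to $M\setminus U_\epsilon(S)$. Concretely, the wrapping property of $\Sigma$ (cf.\ \cite{WangXie23}) yields, for each $\delta>0$, a smooth map $h_\delta\colon \sph^n\setminus\Sigma\to \sph^n$ of nonzero degree together with a Hermitian bundle/connection on $\sph^n\setminus\Sigma$ whose twisting curvature differs from the Llarull model by something small in terms of $\delta$. Pulling this back by $f_\epsilon$ and twisting the spin Dirac operator on $(M,g_\epsilon)$, I get an index-nontrivial twisted Dirac operator $D_\epsilon$ whose Lichnerowicz identity reads
\[
D_\epsilon^2 = \nabla^*\nabla + \tfrac14 \Sc_{g_\epsilon} + \mathcal R_\epsilon,
\]
where Llarull's algebraic lemma gives $\tfrac14\Sc_{g_\epsilon}+\mathcal R_\epsilon\geq 0$ outside $U_\epsilon(S)$ (up to $O(\delta)$). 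Taking a harmonic spinor $\sigma_\epsilon$ produced from the nonzero index, integrating $\langle D_\epsilon^2\sigma_\epsilon,\sigma_\epsilon\rangle$ against a cutoff $\chi_\epsilon$ supported in $M\setminus U_\epsilon(S)$ produces on one side the nonnegative curvature term, and on the other side a boundary/error term of the shape $\int |d\chi_\epsilon|\,|\nabla\sigma_\epsilon|\,|\sigma_\epsilon|$.

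The third step is the key analytic point: the codimension assumption $\operatorname{codim} S\geq \frac{n}{2}+1$ allows one to choose $\chi_\epsilon$ with $\|d\chi_\epsilon\|_{L^n(M)}\to 0$ (equivalently, zero $n$-capacity of $S$), which together with the $L^\infty$ bound on $g$ and standard Sobolev control on $\sigma_\epsilon$ forces the error term to tend to zero. Passing to the limit as $\epsilon\to 0$ (and $\delta\to 0$) yields a nontrivial spinor on $(M,g)$ in the equality case of Llarull's pointwise inequality a.e.\ on $M\setminus S$. By the standard rigidity analysis (equality in the Cauchy--Schwarz step forces $\Bigwedge^2 df$ to have the model eigenvalue structure, so $f$ is a local isometry on $M\setminus S$), one concludes $f$ is a Riemannian isometry on the smooth part; the Lipschitz/density extension to $S$ then promotes this to a metric isometry on all of $M$.

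The main obstacle I expect is precisely controlling the approximation in the second and third steps: the smoothed metrics $g_\epsilon$ have essentially no curvature control inside $U_\epsilon(S)$, so I must ensure that the twisted Dirac index is genuinely produced on the complement, that the cutoff error is absorbed using only the codimension of $S$ and the $L^\infty$ bound on $g$ (not any bound on the Ricci tensor of $g_\epsilon$), and that the equality/rigidity conclusion survives the double limit $\epsilon,\delta\to 0$. The $L^n$-capacity estimate for sets of codimension $\geq \frac n 2+1$ is what makes the whole scheme work, and aligning it precisely with the Weitzenb\"ock Sobolev estimates is where the technical weight lies.
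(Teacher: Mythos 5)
Your scheme diverges from the paper's, and it has a genuine gap at its analytic core. The claim that $\codim S\geq \frac n2+1$ lets you choose cutoffs $\chi_\epsilon$ with $\|d\chi_\epsilon\|_{L^n(M)}\to 0$ is false unless $S$ is zero-dimensional: a compact set of Hausdorff dimension $d\geq 1$ in an $n$-manifold has \emph{positive} $n$-capacity (zero $n$-capacity essentially characterizes point sets, which is why this kind of cutoff works in the Chu--Lee--Zhu setting of finitely many punctures). Since the theorem allows $\dim S$ up to $n-\lceil \frac n2+1\rceil\geq 1$ for $n\geq 4$ (e.g.\ a curve in dimension $4$), your error term $\int |d\chi_\epsilon|\,|\nabla\sigma_\epsilon|\,|\sigma_\epsilon|$ cannot be killed this way, and with no curvature control on the mollified metric inside $U_\epsilon(S)$ there is no other mechanism in your outline to absorb the contribution near $S$. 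This is precisely where the paper takes a different route: following Chu--Lee--Zhu it does not smooth $g$ at all, but solves $\mathcal Lu=0$ with a Green-function singularity $u\sim d(\cdot,S)^{2-k}$ and passes to the conformal metric $g_\times=(1+u)^{4/(n-2)}g$ on $M\setminus S$; the threshold $k\geq\frac n2+1$ enters exactly as the condition $\frac{2(k-2)}{n-2}\geq 1$ making $(M\setminus S,g_\times)$ \emph{complete}, not as a capacity condition.

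Two further points where your sketch does not match what is actually needed. First, the wrapping property does not produce a map $\sph^n\setminus\Sigma\to\sph^n$ with an ``almost Llarull'' connection; it produces a $1$-Lipschitz self-map $h$ of $\sph^n$ with $\deg h\neq 1$ which is an $\SO(n+1)$-isometry on each component of $\Sigma$. The index is then generated by a Callias-type operator $B=D+\varepsilon\sum_\ell\chi_\ell U_\ell$ on the complete manifold $M\setminus S$, comparing the $f_\times$-twist with the $(h\circ f_\times)$-twist; the potential $U_\ell$ exists only because one first modifies $f$ so that a whole neighborhood of $S$ is mapped into $\Sigma$ (Lemma \ref{lemma:constant}), so that the two pulled-back bundles are isometric near the ends, and the relative index theorem gives $\ind(B)=2(1-\deg h)\deg f\neq 0$. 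Your proposal leaves unexplained how a nonzero index is ``genuinely produced on the complement,'' which is the heart of the matter. Second, the equality case: the paper argues by contradiction, showing (Lemma \ref{lemma:bound}, via the first eigenfunction of the operator \eqref{eq:laplace} and a covering-space argument that uses $\codim\Sigma\geq 3$ to contract small loops around $S$) that if $f$ is \emph{not} a metric isometry one can achieve a uniform strict gap $\Sc\geq\sigma_f+\delta$, after which $B^2>0$ contradicts $\ind(B)\neq 0$. Your plan instead requires extracting rigidity from a limit of harmonic spinors under the double limit $\epsilon,\delta\to 0$ on degenerating data, and also glosses over why a local isometry on $M\setminus S$ extends to a metric isometry of all of $M$; both steps would need substantial additional argument even if the capacity issue were repaired.
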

To be precise,  we say $S$ is a finite simplicial complex embedded in a Riemannian manifold $M$ if  there exists an injective map $\psi\colon S \to M$ such that $\psi\colon S\to \psi(S)$ is a bi-Lipschitz map, where $S$ is equipped with the standard simplicial metric and $\psi(S)$ is equipped with the metric inherited from $M$. If no confusion is likely to arise, we shall write $S$ in place of its image $\psi(S)$ in $M$ throughout the paper.   Let us recall  the notion of wrapping property, which  was introduced the second author for studying related scalar curvature rigidity results on spheres \cite{Xie:2021tm}.   \begin{definition}\label{def:wrapping}
	Let $\Sigma$ be a closed subset of $\sph^n$. We say that $\Sigma$ satisfies the wrapping property if there exists a $1$-Lipschitz map $h\colon \sph^n\to\sph^n$ such that
	\begin{enumerate}
		\item $\deg(h)\ne 1$, and
		\item when restricted to any connected component of $\Sigma$, $h$ is equal to an isometry in $\SO(n+1)$.
	\end{enumerate}
\end{definition}

There are many examples of closed subsets of spheres that satisfy the wrapping property, such as any finite subset or a closed subset contained in an open hemisphere. In particular, we have  the following corollary of Theorem \ref{thm:sphere}.
\begin{corollary}\label{coro:point&tree}
	
	Let $\Sigma$ be a finite simplicial complex embedded in $\sph^n$ ($n\geq 3$) of codimension $\geq 3$.   Let $M^n$  be a closed spin manifold and $g$  an $L^\infty$-metric on $M$. Let $f\colon (M,g)\to (\sph^n,g_{\sph^n})$ be a Lipschitz map with non-zero degree. Assume that  
	\begin{enumerate}[label=$(\arabic*)$]
		\item $S = f^{-1}(\Sigma)$ is a finite simplicial complex embedded in $M$ of codimension $k \geq\frac n 2+1$,
		\item the Riemannian metric $g$ on $M$ is smooth away from $S$,
		\item the map $f$ is smooth away from $S$.	
	\end{enumerate}
		Suppose that
	\begin{equation*}
\Sc_{g}\geq |\Bigwedge^2df|\cdot \Sc_{g_{\sph^n}}=|\Bigwedge^2df|\cdot n(n-1)
	\end{equation*}  and one of the following holds
		\begin{enumerate}[label=$(\alph*)$]
			\item $\Sigma$ is contained in a hemi-sphere,
			\item $\Sigma$ is finite,
			\item or $\Sigma$ is a disjoint union of trees,
		\end{enumerate}
		then $f$ is a metric isometry on $M$ up to scaling. 
\end{corollary}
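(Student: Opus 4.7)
By Theorem~\ref{thm:sphere}, it suffices to verify that $\Sigma$ satisfies the wrapping property of Definition~\ref{def:wrapping} in each of the three cases. In every case, the task is to produce a $1$-Lipschitz self-map $h$ of $\sph^n$ with $\deg(h)\ne 1$ that restricts, on each connected component of $\Sigma$, to an element of $\SO(n+1)$.

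For case (a), after an $\SO(n+1)$-rotation I would assume $\Sigma$ lies in the closed upper hemisphere and take $h$ to be the folding map $h(x_1,\ldots,x_{n+1})=(x_1,\ldots,x_n,|x_{n+1}|)$. The inequality $|x_{n+1}||y_{n+1}|\geq x_{n+1}y_{n+1}$ yields $h(x)\cdot h(y)\geq x\cdot y$, and since $d_{\sph^n}(x,y)=\arccos(x\cdot y)$ this forces $h$ to be $1$-Lipschitz. Its image lies in the contractible closed upper hemisphere so $\deg(h)=0$, and $h$ restricts to the identity on $\Sigma$. For case (b) I would take $h$ to be any constant map $h\equiv q$, which is trivially $1$-Lipschitz with degree $0$; each singleton component $\{p\}$ of $\Sigma$ admits, by transitivity of the $\SO(n+1)$-action on $\sph^n$, an isometry $A_p\in\SO(n+1)$ with $A_p(p)=q$, hence $h|_{\{p\}}=A_p|_{\{p\}}$.

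Case (c) is the substantive case. The plan is to construct $h$ as a composition $\phi_2\circ\phi_1$, where $\phi_2$ is the folding map from case (a) and $\phi_1\colon\sph^n\to\sph^n$ is a $1$-Lipschitz map that moves each tree $T_j$ isometrically into a common open hemisphere. Since each tree is a $1$-complex of codimension $\geq 3$ (the hypothesis forces $n\geq 4$), there is room to perform an inductive ``unfolding'' in a tubular neighborhood of each tree: pick a leaf edge of $T_j$, rotate it about the adjacent interior vertex so that it aligns with a chosen great circle, and iterate along the combinatorial structure of the tree. The codimension-$\geq 3$ hypothesis should allow these local rotations to be performed in disjoint tubular neighborhoods across different trees and across successive stages, so that the composite map $\phi_1$ is globally $1$-Lipschitz and restricts to an element of $\SO(n+1)$ on each $T_j$. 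Once $\phi_1$ is in hand, each $\phi_1(T_j)$ lies in a single open hemisphere, and composition with $\phi_2$ produces the required wrapping map of degree $0$.

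The main obstacle is ensuring in case (c) that the unfolding rotations yield a genuinely $1$-Lipschitz self-map of all of $\sph^n$, not merely a local isometry on each tubular neighborhood. Interpolating between a rotation and the identity via a cutoff function inherently introduces stretching, and keeping this stretching below $1$ will require balancing small rotation angles against a gentle cutoff profile, carried out iteratively along the combinatorial structure of the tree. The codimension-$\geq 3$ hypothesis will be essential for providing enough transverse room in the tubular neighborhood to absorb each small rotation without metric distortion, and for ensuring the unfoldings of distinct trees can be realized in mutually disjoint regions of $\sph^n$.
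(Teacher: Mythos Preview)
Your treatment of cases (a) and (b) is correct. In fact, for (b) your constant-map argument is considerably simpler than the paper's: the paper devotes an entire section (Proposition~\ref{prop:wrapfin}) to an intricate iterated-folding construction, whereas your observation that a constant map $h\equiv q$ is $1$-Lipschitz, has degree~$0$, and agrees on each singleton $\{p\}$ with some $A_p\in\SO(n+1)$ sending $p$ to $q$ verifies Definition~\ref{def:wrapping} in one line. This is a genuine improvement.

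Case (c), however, has a real gap, and the paper does \emph{not} proceed by establishing the wrapping property for trees. Your plan is internally inconsistent: Definition~\ref{def:wrapping} requires $h$ to agree with a \emph{single} element of $\SO(n+1)$ on each connected component $T_j$, but the ``unfolding'' you describe rotates different edges by different local rotations, so the composite $\phi_1$ restricted to $T_j$ is only a piecewise isometry, not the restriction of one global rotation. Conversely, if $\phi_1|_{T_j}$ really were the restriction of a single $A_j\in\SO(n+1)$, then $\phi_1(T_j)=A_j(T_j)$ is a rigid copy of $T_j$; a tree whose diameter in $\sph^n$ exceeds $\pi$ cannot be rotated into any hemisphere, so the folding step $\phi_2$ would be unavailable. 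The $1$-Lipschitz interpolation issue you flag is therefore not the main obstacle; the scheme fails at the level of the definition.

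The paper handles (c) by bypassing the wrapping property altogether. Assuming $f$ is not an isometry, it first invokes Proposition~\ref{prop:conformal} to pass to the complete manifold $(M_\times,g_\times)$ with $\Sc_{g_\times}>\sigma_{f_\times}$ and $f_\times(M_\times\setminus K)\subset\Sigma$. It then uses two features of trees that have nothing to do with wrapping: contractibility lets one homotope $f_\times$ on each end $Y_\ell$ (through a map $H\colon[0,\infty)\times\Sigma\to\Sigma$) to a map $\widetilde f_\times$ that is locally constant outside a compact set; and one-dimensionality of $\Sigma$ forces $|\Bigwedge^2 d\widetilde f_\times|=0$ on $M_\times\setminus K$, so the inequality $\Sc_{g_\times}>\sigma_{\widetilde f_\times}$ survives the homotopy. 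At that point $\widetilde f_\times$ maps the ends into a finite set of points, and one is back in case~(b).
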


As mentioned above,  in the special case where $S$ is a finite set of points, Theorem $\ref{thm:sphere}$ was recently obtained by Chu-Lee-Zhu \cite{ChuLeeZhu24}. The statement of their theorem \cite[Theorem 1.4]{ChuLeeZhu24} requires a slightly stronger assumption that $f$ is area nonincreasing and $\Sc_g \geq \Sc_{g_{\sph^n}} = n(n-1)$, but it is not difficult to see that a slight modification of their proof actually shows the rigidity under the weaker assumption \eqref{eq:scalarcomparison}. In fact, our strategy for proving Theorem \ref{thm:sphere} is largely inspired by the work of Chu-Lee-Zhu \cite{ChuLeeZhu24}, combined with some earlier work of the second author on related scalar curvature rigidity results \cite{Xie:2021tm}.

\begin{remark}\label{example:L-inf}
	The following example shows that the $L^\infty$ assumption on the metric $g$ is  necessary for Theorem \ref{thm:sphere}. Let us write the standard metric on $\mathbb S^n$ as 
	$$g_{\mathbb S^n}=dr^2+\cos^2(r)g_{\mathbb S^{n-1}}$$ for $r\in [-\frac{\pi}{2},\frac{\pi}{2}]$. Let us consider a different warped product metric given by
	$$g=\frac{1}{n^2}dr^2+n^4\cos^2(\frac r n)g_{\mathbb S^{n-1}},$$ which is smoothly defined on $\mathbb S^n$ away from the poles. By specifying an inner product at the two poles, we can view $g$ as a singular metric on all of $\mathbb S^n$. 
	Note that $g$ is clearly not an $L^\infty$-metric on $\mathbb S^n$. Let 
	$$f\colon (\mathbb S^n,g)\to (\mathbb S^n,g_{\mathbb S^n})$$
	 be the identity map. Then
	$$|\Bigwedge^2 df|=\max\left\{\frac{\cos^2(r)}{n^4\cos^2(\frac r n)},n\cdot \frac{\cos(r)}{n^2\cos(\frac r n)}\right\}\leq \frac{n^3}{n^4\cos^2(\frac r n)}.$$
	Set $t=\frac r n$ and $\varphi(t)=n^2\cos(t)$. Then
	\begin{align*}
		\mathrm{Sc}_g=&\frac{(n-1)(n-2)}{\varphi^2}-\frac{(n-1)(n-2)\varphi'^2}{\varphi^2}-2(n-1)\frac{\varphi''}{\varphi}\\
		=&(n-1)\frac{n-2-n^5\sin^2(\frac r n)+2n^4}{n^4\cos^2(\frac r n)}
	\end{align*}
	However,
	\begin{align*}
		\mathrm{Sc}_g-|\Bigwedge^2df|\cdot n(n-1)
		&\geq(n-1)\frac{n-2-n^5\sin^2(\frac r n)+n^4}{n^4\cos^2(\frac r n)}\\
		\geq&
		\frac{n-1}{n^4\cos^2(\frac r n)}\left(n-2+n^3(n-\frac{\pi^2}{4})\right),
	\end{align*}
	which is positive if $n\geq 3>\frac{\pi^2}{4}$. This gives an example that would  violate Theorem \ref{thm:sphere}, if we were to remove the $L^\infty$ assumption on  $g$. 
	

We should remark that, if we replace  the assumption \eqref{eq:scalarcomparison} by the stronger assumption that $f$ is distance-non-increasing and $\Sc_{g}\geq  n(n-1)$, then  the rigidity statement in Theorem \ref{thm:sphere} holds for $\sph^n\backslash\{\pm\}$ without the $L^\infty$ assumption on $g$, where $\{\pm\}$ stands for a pair of antipodal points. In particular, if  $g$ is a (possibly not $L^\infty$) Riemannian metric  $\sph^n$ that is smooth away from $\{\pm\}$  such that $g\geq g_{\sph^n}$ and $\Sc_g\geq n(n-1)$, then $g = g_{\sph^n}$. See \cite{Hirsch:2022aa, MR4594778} for $n=3$ and \cite{Baer:2023aa,WangXie23} for arbitrary dimensions.
For more general subsets $\Sigma$, it remains an open question whether such a scalar curvature rigidity result holds under the assumption that $f$ is distance-non-increasing and $\Sc_{g}\geq  n(n-1)$, but without  the $L^\infty$ assumption. 
\end{remark}
\begin{remark}\label{example:>=3}
	The following example shows  that the codimension of $S$ has to be at least $ 3$. Consider the standarnd round metric $g_{\sph^2}$ on $\sph^2$, which may be viewed as  $dr^2+(\sin r)^2d\theta^2$ for $r\in[0,\pi]$ and $\theta\in[0,2\pi]$. Let $\{\pm\}$ be the two poles at $r=0,\pi$ in $\sph^2$, which has codimension $2$. Consider the metric 
	$$\overbar g_{\sph^2}\coloneqq dr^2+C^2\sin^2 rd\theta^2.$$
	on $\sph^2\backslash\{\pm\}$ with $C>1$. Clearly, we have  $g_{\sph^2}\leq \overbar g_{\sph^2}\leq C^2g_{\sph^2}$, hence $\overbar g_{\sph^2}$ is an $L^\infty$-metric on $\sph^2$ that is smooth away from $\{\pm\}$. The identity map $f\colon (\sph^2,\overbar g_{\sph^2})\to(\sph^2,g_{\sph^2})$ satisfies
	$$|\Bigwedge^2df(r,\theta)|=C^{-1}<1,$$
	but we have $\Sc_{\overbar g_{\sph^2}}= \Sc_{g_{\sph^2}}>C^{-1}\Sc_{g_{\sph^2}}$.
	
	By induction, we may construct a similar  example in any higher dimension. Assume that we have already constructed a $L^\infty$ metric $\overbar g_{\sph^n}$ on $\sph^n$ that is smooth away from some codimension $2$ subset $\Sigma_n$ such that $g_{\sph^n}\leq\overbar g_{\sph^n}\leq C^2 g_{\sph^n}$ and $\Sc_{\overbar g_{\sph^n}}\geq \Sc_{g_{\sph^n}}$. Then the metric
	$$\overbar g_{\sph^{n+1}}\coloneqq dr^2+(\sin r)^2 \overbar g_{\sph^n}$$
	is a $L^\infty$ metric $\overbar g_{\sph^n}$ on $\sph^n$ that is smooth away from a  subset $\Sigma_{n+1}$ such that $g_{\sph^n}\leq\overbar g_{\sph^n}\leq C^2 g_{\sph^n}$ and $\Sc_{\overbar g_{\sph^n}}\geq \Sc_{g_{\sph^n}}$. Here $\Sigma_{n+1}$ consists of  all geodesics between the two poles that pass through $\Sigma_n$. In particular, $\Sigma_{n+1}$ has codimension $2$ in $\sph^{n+1}$. 
 
 We prove  Theorem \ref{thm:sphere} under the condition that $\codim S\geq \frac{n}{2}+1$, which implies $\codim S\geq 3$, since $n\geq 3$. In general, it remains an open question whether Theorem \ref{thm:sphere} still hold under the weaker assumption that  $\codim S\geq 3$. 
\end{remark}

\begin{remark}
	Under the assumptions of Theorem \ref{thm:sphere}, although we conclude that $f\colon M\to\sph^n$ is a metric isometry, it is \emph{not} smoothable  on the entire $M$ in general. For example, let us consider $(M,g_0)$ to be a smooth exotic sphere for $n=7$, where $g_0$ is a Riemannian metric on $M$ that is smooth with respect to the exotic smooth structure, cf. \cite{Cecchini:2024aa}. The smooth structure on $M$ can be obtained by gluing two copies of the $n$-dimensional ball $\mathbb B^n$ along their  boundary $\sph^{n-1}$ via some nontrivial diffeomorphism  $\varphi\colon \sph^{n-1}\to \sph^{n-1}$.  Let $x_0$ be the center of $\mathbb B^n$. Then the map  $\varphi\colon \sph^{n-1}\to \sph^{n-1}$ extends conically to a continuous map $\Phi\colon \mathbb B^n \to \mathbb B^n$. Note that $\Phi$ is smooth away from $x_0$, but not smooth at $x_0$. We call the images of the two copies of $\mathbb B^n$ in $M$ upper and lower hemispheres.  Consider the following map $f\colon M\to \sph^n$: $f$ is the identity map on the lower hemisphere, and $f=\Phi$ on the upper hemisphere. It is not difficult to see that  $g\coloneqq  f^*g_{\sph^n}$ is  $L^\infty$-metric on $M$ (with respect to the background metric $g_0$). Now $f\colon (M,g)\to (\sph^n,g_{\sph^n})$ satisfies all the conditions in Theorem \ref{thm:sphere}, and clearly $f$ is a metric isometry by construction. But $f$ is not smoothable on $M$.  
\end{remark}

The same techniques used to prove Theorem \ref{thm:sphere} also apply to give a partial answer to the following open question of Schoen (cf. \cite[Conjecture 1.5]{MR3961306}). 
\begin{question}\label{ques:schoen}
	Suppose $g$ is a $L^\infty$ metric on the torus $\mathbb T^n$ that is smooth away from a closed embedded submanifold $S$ of codimension $\geq 3$. If $\Sc_g\geq 0$ on $\mathbb T^n-S$, then does  $g|_{\mathbb T^n-S}$ extends to a smooth flat metric on $\mathbb T^n$?  
\end{question}

More precisely, we have the following theorem on the scalar curvature rigidity for tori with certain subsets removed.  

\begin{theorem}\label{thm:torus-intro}
	Let $M^n$ ($n\geq 3$) be a closed spin manifold and $S$ a finite simplicial complex embedded in $M$ of codimension $k$. Let $g$ be an $L^\infty$-metric on $M$ that is smooth outside $S$. Let $f\colon (M,g)\to (\T^n,g_{\T^n})$ be a Lipschitz map with non-zero degree. Suppose that $\Sc_{g}\geq 0$ on $M\backslash S$. If $k = \codim S  \geq\frac n 2+1$ and $$f_*\colon \pi_1(S)\to \pi_1(\T^n)$$ is the zero map, 
	then $g$ is Ricci flat on $M\backslash S$.
\end{theorem}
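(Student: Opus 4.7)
The plan is to mimic the Dirac operator argument used for Theorem \ref{thm:sphere}, with the twisting bundle pulled back from the sphere replaced by a flat bundle pulled back from the torus. Let $\Gamma = \pi_1(\T^n) \cong \Z^n$ and let $\mathcal L_{\T^n} = \widetilde{\T^n}\times_\Gamma C^*_r(\Gamma)$ denote the canonical Mishchenko flat bundle of $C^*_r(\Gamma)$-modules on $\T^n$. Set $\mathcal L \coloneqq f^*\mathcal L_{\T^n}$; a priori it is a flat $C^*_r(\Gamma)$-bundle only on $M\backslash S$. The hypothesis $f_*\colon \pi_1(S)\to \pi_1(\T^n) = 0$ together with the codimension bound $k\geq 3$ (which forces $\pi_1$ of a deleted tubular neighborhood of $S$ to surject onto $\pi_1(S)$) implies that $\mathcal L$ admits a flat trivialization on a neighborhood of $S$, and so extends as a flat bundle over all of $M$.

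With $\mathcal L$ globally defined, the twisted Dirac operator $D_g \otimes \mathcal L$ has a well-defined higher index in $K_*(C^*_r(\Gamma))$ that is, by the standard index computation using the assembly isomorphism $K_*(\T^n)\cong K_*(C^*_r(\Gamma))$ for abelian $\Gamma$, equal to $f_*[M]$, and hence non-zero because $\deg(f)\neq 0$ and $\T^n$ is a $K(\Gamma,1)$. Because $\mathcal L$ is flat, the twisted Bochner--Lichnerowicz formula reads
\begin{equation*}
(D_g\otimes \mathcal L)^2 = \nabla^*\nabla + \tfrac14 \Sc_g,
\end{equation*}
with no curvature contribution from the twist. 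Combined with $\Sc_g\geq 0$ on $M\backslash S$ and the non-vanishing of the index, this would produce a non-zero parallel $\mathcal L$-valued spinor $\sigma$ on $M\backslash S$. Specializing $\sigma$ at a character of $\Gamma$ (equivalently, a point of the dual torus) yields a non-zero parallel ordinary spinor for some flat unitary line-bundle twist of the spin structure on $M\backslash S$. Since any non-zero parallel spinor on a spin manifold forces the Ricci endomorphism to annihilate it, and parallel spinors are either identically zero or nowhere zero, we conclude $\mathrm{Ric}_g = 0$ on $M\backslash S$.

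The main obstacle will be giving precise analytical meaning to $D_g\otimes \mathcal L$ and its higher index when $g$ is merely $L^\infty$ on $M$ and smooth only outside $S$. Exactly as in the proof of Theorem \ref{thm:sphere}, I would approximate $g$ by smooth metrics $g_\varepsilon$ that coincide with $g$ outside an $\varepsilon$-tubular neighborhood of $S$, combined with an auxiliary conformal deformation near $S$ that makes the cutoff Bochner identity close up in the limit. The condition $k\geq \tfrac{n}{2}+1$ is exactly what is needed to run this capacity argument: it forces $S$ to be polar for the $W^{1,2}$ Sobolev space, so cutoff functions supported in an $\varepsilon$-neighborhood of $S$ can be chosen with vanishing gradient energy as $\varepsilon\to 0$, and the limit preserves both the non-vanishing of the higher index and the parallelism of $\sigma$. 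With these analytical ingredients imported verbatim from the sphere case, the rest of the proof is as outlined above.
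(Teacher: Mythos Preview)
Your overall strategy---twist by the Mishchenko bundle $\mathcal L=f^*\mathcal L_{\T^n}$, invoke nonvanishing of the higher index, and extract a parallel spinor via Bochner---is a genuinely different route from the paper's. The paper argues by contradiction: if $g$ is not Ricci flat, Kazdan's deformation produces a metric with $\Sc\geq\delta>0$ on $M\backslash S$; then the Green-function conformal blow-up of Section~\ref{sec:Green} makes $(M\backslash S, g_\times)$ \emph{complete} with $\Sc_{g_\times}>0$; finally, enlargeability of $\T^n$ (large finite covers together with $\alpha$-contracting maps $\psi\colon\widetilde{\T^n}\to\sph^n$) reduces everything to the sphere argument already established. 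The hypothesis $f_*\colon\pi_1(S)\to\pi_1(\T^n)=0$ enters there to ensure that in the induced cover $\widetilde M$ the preimage $\widetilde S$ splits into disjoint copies of $S$, so each copy has diameter independent of the cover and is sent by $\psi\circ\widetilde f$ into an arbitrarily small ball of $\sph^n$. It is \emph{not} needed to extend $\mathcal L$ over $S$: since $f$ is continuous on all of $M$, the flat bundle is already globally determined by $f_*\colon\pi_1(M)\to\Gamma$.

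The real gap is in your analytical handling. The proof of Theorem~\ref{thm:sphere} does not approximate $g$ by smooth metrics; it works on the open manifold $M\backslash S$ after the conformal change $g_\times=(1+u)^{4/(n-2)}g$, and the role of $k\geq\tfrac{n}{2}+1$ is precisely to make $g_\times$ complete (Proposition~\ref{prop:conformal}), via $\int_0^r t^{-2(k-2)/(n-2)}\,dt=\infty$. By contrast, $W^{1,2}$-polarity of $S$ (cutoffs with vanishing gradient energy) requires only codimension $\geq 2$, so your claim that $k\geq\tfrac{n}{2}+1$ is ``exactly'' the polarity threshold is incorrect. More seriously, for a smooth approximation $g_\varepsilon$ you have no control on $\Sc_{g_\varepsilon}$ near $S$---the hypothesis $\Sc_g\geq 0$ lives only on $M\backslash S$---so the Bochner term $\int\Sc_{g_\varepsilon}|\sigma_\varepsilon|^2$ can be arbitrarily negative, and neither the Fredholm property, the index computation, nor the parallel-spinor conclusion survives the limit as stated. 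If instead you adopt the paper's conformal-completeness mechanism, your Mishchenko-bundle strategy does become viable: the hypothesis $f_*|_{\pi_1(S)}=0$ then trivializes $\mathcal L$ on the ends of $M_\times$, allowing a Fredholm potential as in line~\eqref{eq:diracpotential} and a relative higher-index computation. But that is not the argument you have sketched.
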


We remark that the above theorem (and the same proof) also hold if  we replace the torus $\mathbb T^n$  by any closed enlargeable spin manifold (cf.  \cite[Definition 6.4]{GromovLawson}). In the special case where $S$ consists of finitely points, Theorem \ref{thm:torus-intro} also follows from Theorem C of a recent preprint of Cecchini-Frenck-Zeidler  \cite{Cecchini:2024aa}.  We would like to also mention that there is a more general version of Question \ref{ques:schoen} for closed manifolds with nonnegative smooth $\sigma$-invariant (cf. \cite[Conjecture 1.5]{MR3961306}). In the same preprint \cite{Cecchini:2024aa},  Cecchini, Frenck and Zeidler provided a negative answer to this more general version of Question \ref{ques:schoen}. 

As a consequence of Theorem \ref{thm:torus-intro}, we have the following version of positive mass theorem for $L^\infty$ metrics. See \cite{DaiSunWang24} for a  positive mass theorem for metrics with isolated conical singularities. 

\begin{theorem}
	Let $M^n$ be a smooth manifold  equipped with a $L^\infty$ Riemannian metric $g$ such that $(M^n, g)$ is a complete asymptotically flat spin manifold with arbitrary ends.\footnote{See Definition \ref{def:af} for the precise definition of complete asymptotically flat spin manifolds with arbitrary ends.} Suppose  $S$ a finite simplicial complex embedded in $M$ of codimension $\geq \frac{n}{2} +1$ and  $g$ is smooth outside $S$. If $\Sc_g\geq 0$, then the ADM mass $m_g$ of $(M, g)$ is greater than or equal to $ 0$. Furthermore, if  $m_g=0$, then $g$ is Ricci flat.
\end{theorem}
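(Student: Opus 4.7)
The plan is to reduce this positive mass statement to the torus rigidity result Theorem \ref{thm:torus-intro} via a Lohkamp-type compactification, following the template now standard in spin treatments of the positive mass theorem with arbitrary ends.

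I first establish $m_g \geq 0$ by contradiction. Assume $m_g < 0$, fix a distinguished end $E$ of $M$, and choose a compact set $K \subset M$ containing the singular complex $S$. Since $g$ is smooth on $M\backslash S \supset M\backslash K$, the classical Lohkamp deformation, performed only in this smooth asymptotically flat region and leaving $g|_K$ unchanged, produces a new complete metric $\tilde g$ on $M$ such that: $\tilde g$ is an $L^\infty$-metric smooth away from the same simplicial complex $S$ of the same codimension; $\Sc_{\tilde g}\geq 0$ on $M\backslash S$ with $\Sc_{\tilde g}>0$ on a set of positive measure; $\tilde g$ is exactly Euclidean outside a larger compact subset of $E$; and all other ends of $M$ are truncated and capped off within the smooth region with $\Sc\geq 0$. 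The negativity of $m_g$ is precisely what allows one to force the mass to zero on $E$ while killing the remaining ends.

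Next I carry out the torical compactification of $E$. Identifying opposite faces of a large coordinate cube $[-L,L]^n$ sitting in the Euclidean portion of $E$ turns the truncated $M$ into a closed spin manifold $\widehat M$ carrying an $L^\infty$-metric $\hat g$ smooth away from $S$, together with a Lipschitz map $\hat f \colon \widehat M \to \T^n$ of nonzero degree that is constant on a neighborhood of $K$. Since $S \subset K$, the image $\hat f(S)$ is a single point, so $\hat f_* \colon \pi_1(S) \to \pi_1(\T^n)$ is the zero map, as required by the hypotheses of Theorem \ref{thm:torus-intro}. Applying that theorem forces $\hat g$ to be Ricci flat on $\widehat M \backslash S$, contradicting the strict positivity of $\Sc_{\hat g}$ on a set of positive measure. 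Hence $m_g \geq 0$.

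For the rigidity claim, suppose $m_g = 0$ and $g$ fails to be Ricci flat on $M \backslash S$. A standard Schoen--Yau conformal perturbation $g' = u^{4/(n-2)}g$, with $u > 0$ smooth, identically $1$ on a neighborhood of $K$, and asymptotic to $1 - c|x|^{2-n}$ on $E$ for some $c > 0$, yields an $L^\infty$-metric smooth away from $S$ with $\Sc_{g'} \geq 0$ and $m_{g'} = m_g - c < 0$, contradicting the inequality just proved. Therefore $g$ is Ricci flat on $M\backslash S$. The main obstacle is to carry out the Lohkamp deformation entirely within the smooth asymptotically flat region so that (i) the $L^\infty$-structure and the singular set $S$ are transported unchanged into $\widehat M$, (ii) at least one point of strict scalar curvature positivity survives in order to contradict Ricci flatness, and (iii) the compactification produces a map of nonzero degree whose restriction to $S$ has trivial image in $\pi_1(\T^n)$. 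Each ingredient is essentially standard; the key observation that makes the argument robust in the weak-regularity setting is that all modifications of $g$ occur outside the compact set $K$ where the singularity lives, so the codimension hypothesis of Theorem \ref{thm:torus-intro} is automatically inherited by $\widehat M$.
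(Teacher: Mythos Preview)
Your overall strategy---reduce to Theorem \ref{thm:torus-intro} via a Lohkamp-type compactification of the distinguished asymptotically flat end---matches the paper's. However, there is a genuine gap in how you handle the \emph{other} ends.

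You assert that ``all other ends of $M$ are truncated and capped off within the smooth region with $\Sc\geq 0$,'' but Definition \ref{def:af} imposes no structure whatsoever on those ends beyond completeness. In general such ends cannot be filled in by any compact manifold, let alone one with nonnegative scalar curvature: they need not even be topologically null-cobordant, and the negativity of $m_g$ on the distinguished end gives you no leverage on them. So the passage to a \emph{closed} manifold $\widehat M$ is unjustified, and you cannot apply Theorem \ref{thm:torus-intro} as stated.

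The paper handles this differently. After the Lohkamp modification on the distinguished end (justified in the $L^\infty$ setting via \cite{MR4597224} and the elliptic theory for bounded coefficients), the resulting manifold $\overbar M$ is the connected sum of a torus with a complete, generally \emph{noncompact} spin manifold---the arbitrary ends remain. The paper then invokes not the statement but the \emph{proof} of Theorem \ref{thm:torus-intro}: on the noncompact $\overbar M$ one runs the same Dirac-operator argument, introducing auxiliary potentials $\varepsilon\sum_\ell\chi_\ell U_\ell$ along each extra end exactly as in the proof of Theorem \ref{thm:sphere} (cf.\ line \eqref{eq:diracpotential}), so that the perturbed operator is Fredholm and the index contradiction goes through. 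This is the step your capping-off shortcut was meant to replace, and it is the essential content you are missing.

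Your rigidity paragraph is in the same spirit as the paper's, which simply cites \cite{MR788292}.
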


The paper is organized as follows. In Section \ref{sec:sphererigid}, we prove Theorem \ref{thm:sphere}, a scalar curvature rigidity result of $\sph^n\backslash \Sigma$ for $L^\infty$ metrics, where $\Sigma$ satisfies the wrapping property. In Section \ref{sec:tori}, we prove Theorem \ref{thm:torus-intro}, a scalar curvature rigidity result for tori with certain subsets removed. In Section \ref{sec:pmt}, we apply Theorem \ref{thm:torus-intro} to prove a positive mass theorem for complete $L^\infty$ metrics.
In the Appendix \ref{app:wrapping}, we show that each finite subset of the sphere $\sph^n$ satisfies the wrapping property.  

We would like to thank Simone Cecchini for helpful comments.

\section{Scalar curvature rigidity for $\sph^n\backslash \Sigma$}\label{sec:sphererigid}
In this section, we prove Theorem \ref{thm:sphere}, a scalar curvature rigidity result of $\sph^n\backslash \Sigma$ for $L^\infty$ metrics, where $\Sigma$ satisfies the wrapping property. 

\subsection{Green function and conformal deformation}\label{sec:Green}
Let us retain the same notation from Theorem \ref{thm:sphere}. In this subsection, following Chu-Lee-Zhu \cite{ChuLeeZhu24},  we use Green functions to construct a complete metric $g_\times$ on $M\backslash S$ while maintaining the assumptions of  Theorem \ref{thm:sphere} (in particular, inequality \eqref{eq:scalarcomparison}) for the new metric $g_\times$.

We first define a conformal Laplacian on $M$ associated to the map $f$. For any point $x\in M\backslash S$, let $\Bigwedge^2df_x\colon \Bigwedge^2T_xM\to \Bigwedge^2T_{f(x)}\sph^n$ be the linear map on $2$-forms induced by $df$. Define
$$\sigma_f\colon M\backslash S\to \R_{\geq 0},\quad x\mapsto 2|\Bigwedge^2df_x|_{\tr},$$
where $|\cdot|_{\tr}$ denote the trace norm
Since
$$|\Bigwedge^2df_x|_{\tr}=\sqrt{\tr\left(\left(\Bigwedge^2df_x\right)^*\circ\Bigwedge^2df_x\right),}$$
$\sigma_f(x)$ is not smooth only if $\sigma_f(x)=0$. We define
\begin{equation}\label{eq:laplace}
	\mathcal L=-\frac{4(n-1)}{n-2}\Delta_g+\phi(\Sc_g-\sigma_f)
\end{equation}
where $\phi\colon \R_{\geq 0}\to\R_{\geq 0}$ is a smooth non-decreasing bounded function\footnote{The specific form of the function $\phi$ is not so important for the construction that follows, as long as the listed properties are satisfied.} such that
\begin{itemize}
	\item $\phi(s)\leq s,~\forall s\geq 0$,
	\item $\phi(s)=0$ only if $s=0$,
	\item $\phi(s)\leq 20$, and $\frac{d^k\phi}{dt^k}(0)=0$ for any $k\geq 0$.
\end{itemize}
Here $-\Delta_g$ is the Laplace-Beltrami operator on $M$ (with respect to the $L^\infty$ metric $g$). We refer the reader to \cite{MR0161019} and \cite{MR0657523} for more details on some key properties of Laplace-Beltrami operators when the background metrics are only $L^\infty$ metrics. To avoid any possible confusion, our sign convention is that $-\Delta_g$ is a nonnegative operator. 

By construction, if $\Sc_g\geq\sigma_f$, then $\phi(\Sc_g-\sigma_f)$ is a smooth non-negative function on $M\backslash S$, and $\mathcal L$ is non-negative. Furthermore, in this case, if the spectrum of $\mathcal L$ contains zero, then its bottom eigen-function is a nonzero constant function. It follows that $\phi(\Sc_g-\sigma_f)\equiv 0$ on $M\backslash S$, hence $\Sc_g\equiv \sigma_f$ on $M\backslash S$. Thus if $\Sc_g\geq\sigma_f$ and $\Sc_g>\sigma_f$ somewhere on $M\backslash S$, then $\mathcal L$ is strictly positive.

Recall that $f\colon M\to\sph^n$ is called a metric isometry if
$$d_{g_{\sph^n}}(f(x),f(y))=d_g(x,y)$$
for any $x,y\in M$.
The following lemma shows that if the map $f\colon M \to \sph^n$ is not a metric isometry up to scaling,  then we may construct a new $L^\infty$ metric $g_1$ on $M$ such that $g_1$ is smooth away from $S$ and the inequality \eqref{eq:scalarcomparison} becomes  a strict inequality for all $y\in M\backslash S$.

\begin{lemma}\label{lemma:bound}
	With the same assumptions as Theorem \ref{thm:sphere}, if $f$ is not a metric isometry up to scaling,  then there exist $\delta_1>0$ and an $L^\infty$-metric $g_1$ such that $g_1$ is smooth away from $S$ and $f_1\coloneqq f\colon (M,g_1)\to (\sph^n,g_{\sph^n})$ satisfies $\Sc_{g_1}\geq \sigma_{f_1}+\delta_1$ on $M\backslash S$, where $\sigma_{f_1}(y) = 2|\Bigwedge^2d(f_1)_y|_{\tr}$ for $y\in M\backslash S$.
\end{lemma}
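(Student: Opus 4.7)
The plan is to construct $g_1$ as a conformal deformation $g_1 = u^{4/(n-2)} g$ of $g$, for a positive function $u$ on $M$ to be chosen. A direct computation using the conformal transformation law for scalar curvature and the scaling $\sigma_{f_1} = u^{-4/(n-2)}\sigma_f$ of the trace-norm function under conformal change yields the key identity
\[
\Sc_{g_1} - \sigma_{f_1} = u^{-(n+2)/(n-2)}\Big(-\tfrac{4(n-1)}{n-2}\Delta_g u + (\Sc_g - \sigma_f)\,u\Big).
\]
Since $\phi(s) \le s$ and $\Sc_g - \sigma_f \ge 0$, the right-hand side is bounded below by $u^{-(n+2)/(n-2)}\mathcal L u$. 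Hence the task reduces to finding $u > 0$, smooth on $M\backslash S$, uniformly bounded above and below by positive constants, and satisfying $\mathcal L u \ge \delta$ pointwise for some $\delta > 0$.

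First I would examine the analytic properties of $\mathcal L$. Under the $L^\infty$ bound on $g$, the operator $-\Delta_g$ is uniformly elliptic with bounded measurable coefficients, and by the construction of $\phi$ the potential $\phi(\Sc_g - \sigma_f)$ is bounded and nonnegative. Thus $\mathcal L$ is self-adjoint on $L^2(M,g)$ with discrete spectrum, and its smallest eigenvalue $\lambda_1 \ge 0$ is attained by a positive continuous eigenfunction via De Giorgi--Nash--Moser--Harnack theory. Testing the eigenvalue equation against the eigenfunction and integrating shows that $\lambda_1 = 0$ precisely when $\phi(\Sc_g - \sigma_f) \equiv 0$, that is, $\Sc_g = \sigma_f$ almost everywhere on $M\backslash S$.

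In the generic case $\lambda_1 > 0$, the operator $\mathcal L$ is invertible on $L^2(M,g)$. Solving $\mathcal L u = \delta$ for some fixed small $\delta > 0$ produces a unique $u$ that is strictly positive by the maximum principle, enjoys uniform positive upper and lower bounds on $M$ by Moser's Harnack inequality for $L^\infty$ metrics, and is smooth on $M\backslash S$ by classical elliptic regularity (since $g$ is smooth there). Setting $g_1 = u^{4/(n-2)} g$ and applying the identity above yields $\Sc_{g_1} - \sigma_{f_1} \ge \delta/(\sup u)^{(n+2)/(n-2)} =: \delta_1 > 0$ on $M\backslash S$, as required. The $L^\infty$ property of $g_1$ relative to the background metric $g_0$ follows from the $L^\infty$ property of $g$ together with the two-sided positive bounds on $u$.

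The main obstacle is ruling out the degenerate case $\lambda_1 = 0$. In this case $\Sc_g \equiv \sigma_f$ on $M\backslash S$, and unpacking the equality in $\Sc_g \ge |\Bigwedge^2 df|\cdot n(n-1) \ge 2|\Bigwedge^2 df|_{\tr}$ forces all singular values of $df$ to coincide pointwise, so that $f$ is a conformal map on $M\backslash S$ with some smooth factor $\lambda$ satisfying $\Sc_g = n(n-1)\lambda^2$. I expect to rule this case out by combining Liouville's rigidity for conformal maps in dimension $n\ge 3$ with the codimension $\ge \frac{n}{2}+1 \ge 3$ condition on $S$ (which ensures that $\sph^n\backslash\Sigma$ is simply connected and permits a removable-singularity argument for the resulting Yamabe-type PDE on $\sph^n$) to conclude that $\lambda$ must be constant, and then using the Lipschitz nature of $f$, its nonzero degree, and the $L^\infty$ control on $g$ to pin this constant down to $1$, contradicting the hypothesis that $f$ is not a metric isometry.
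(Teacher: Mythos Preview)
Your overall strategy---conformal deformation $g_1 = u^{4/(n-2)}g$ governed by the operator $\mathcal L$---matches the paper's. Taking $u$ to solve $\mathcal L u = \delta$ rather than using the first eigenfunction $v$ with $\mathcal L v = \lambda v$ is a harmless variant: both constructions need $\lambda_1(\mathcal L) > 0$, and both then conclude via elliptic regularity and the Harnack inequality for uniformly elliptic operators with $L^\infty$ coefficients.

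The real gap is in your treatment of the degenerate case $\lambda_1 = 0$. You correctly observe that $\Sc_g \equiv \sigma_f$ forces the singular values of $df$ to coincide, so that $f$ is conformal on $M\backslash S$. But the proposed resolution via ``Liouville's rigidity for conformal maps'' is not applicable: the classical Liouville theorem concerns conformal maps between domains in $\R^n$ or $\sph^n$, whereas here the source $(M\backslash S, g)$ is an a priori arbitrary Riemannian manifold, and being the conformal preimage of a spherical domain gives you no rigidity by itself. The follow-up sketch (Yamabe-type PDE, removable singularities, ``pinning the constant down to $1$'') is too vague to be a proof---for instance, you have no lower bound on the conformal factor $\lambda$ near $S$ with which to invoke a removable-singularity theorem, and no mechanism is offered to force $\lambda \equiv 1$ rather than some other constant.

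The paper handles this case very differently, and it occupies the bulk of the proof. The paper argues the contrapositive: assuming $df$ is an isometry at \emph{every} point of $M\backslash S$ (the paper phrases the equality condition as ``isometry'' rather than merely ``conformal''), one shows that $f$ is $1$-Lipschitz and surjective with finite fibers, hence a finite Riemannian covering $M\backslash S \to \sph^n\backslash\Sigma$. The hypothesis $\codim\Sigma \geq 3$ then guarantees that small loops near $\Sigma$ are contractible in $\sph^n\backslash\Sigma$; a path-lifting argument uses this to show $f$ is injective near $S$ and upgrades the local Riemannian isometry on $M\backslash S$ to a local metric isometry on all of $M$, hence a global one by simple connectivity of $\sph^n$. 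This covering-space / path-lifting argument is the main substance of the lemma and is entirely absent from your proposal.
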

\begin{proof}
	Let us first prove the following claim.
		\begin{claim*}
		Under the assumption of Theorem \ref{thm:sphere},
		\begin{itemize}
			\item either $f$ is a metric isometry up to scaling, 
			\item or $\Sc_g(x)>\sigma_f(x)$ for some $x\in M\backslash S$.
		\end{itemize}
	\end{claim*} 
	By assumption,  we have
	$$\Sc_g(x)\geq |\Bigwedge^2df_x|\cdot n(n-1)$$
	for all  $x\in M\backslash S$.
	It is clear from the H\"{o}lder inequality that
	$$|\Bigwedge^2df_x|\cdot n(n-1)\geq 2|\Bigwedge^2df_x|_{\tr}=\sigma_f(x),$$
	and  the equality holds if and only if $df_x$ is a homethety on $M\backslash S$, namely 
	$$\|df_x(\xi)\|=h\cdot \|\xi\|,~\forall \xi\in T_xM.$$
	Therefore, if we assume that $\Sc_g\equiv \sigma_f$ on $M\backslash S$, then there exists a function $h$ on $M\backslash S$ such that
	$$\|df_x(\xi)\|=h(x)\cdot \|\xi\|,~\forall x\in M\backslash S,\textup{ and }\xi\in T_yM.$$
	In particular, $h$ is uniformly bounded, non-negative, and smooth whenever $h\ne 0$. 
	
	Since $f$ has non-zero degree, there exist points where $h\neq 0$. At these points, we have 
	$$\Sc_g(x)=h(x)^2\cdot n(n-1),\textup{ and }f^*g_{\sph^n}=h^2 g_M.$$
	Set $v=h^{\frac{n-2}{2}}$. It follows that
	$$n(n-1)=v^{-\frac{n+2}{n-2}}\left(-\frac{4(n-1)}{(n-2)}\Delta_g v+\Sc_g v\right)=-v^{-\frac{n+2}{n-2}}\frac{4(n-1)}{(n-2)}\Delta_g v+n(n-1).$$
	Thus $\Delta_g v=0$. It follows from the maximal principle that $v$ is never zero on $M\backslash S$. Furthermore, since $S$ has codimension $\geq 3$, it is a removable singularity of a non-negative bounded harmonic function. It follows that $v$ is a positive constant.
	
	Without loss of generality, we assume that $v\equiv 1$, that is, $df\colon T_xM \to T_{f(x)}\sph^n$ is an isometry for every $x\in M\backslash S$. 
	For any two points $x,y\in M\backslash S$, since the codimension of $S$ is $\geq 3$, there exists a sequence of smooth paths $\gamma_n\colon [0,1]\to M\backslash S$ connecting $x$ and $y$ such that
		$$\lim_{n\to\infty}\textup{length}(\gamma_n)=\lim_{n\to\infty}\int_0^1|\gamma'|dt=\dist_M(x,y).$$
		As $f\colon M\backslash S\to \sph^n\backslash\Sigma$ is a local isometry, we have $\textup{length}(f(\gamma_n))=\textup{lenght}(\gamma_n)$. Consequently, 
		$$\dist_{\sph^n}(f(x),f(y))\leq \dist_M(x,y),~\forall x,y\in M\backslash S.$$
		Since $f\colon M\to \sph^n$ is continuous, $f\colon M\to\sph^n$ is $1$-Lipschitz. Since $\deg f\ne 0$, it follows that $f$ is surjective. Note that the preimage $f^{-1}(b)$ for every $b\in \sph^n\backslash \Sigma$ is finite. Indeed, suppose that $f^{-1}(b)$ were infinite and let $y$ be a limit point of $f^{-1}(b)$. Then
		\begin{itemize}
			\item if $y\in S$, then $f(y)=b$ as $f\colon M\to \sph^n$ is $1$-Lipschitz, which leads to a contradiction as $f(y)\in\Sigma$;
			\item if $y\notin S$, then there is a small neighborhood of $y$, which contains infinitely many points in $f^{-1}(b)$, mapping isometrically to a small neighborhood of $b$, which also leads to a contradiction.
	\end{itemize}
	It follows that  $f\colon M\backslash S\to \sph^n\backslash\Sigma$ is a finite Riemannian covering map. Note  the following map
	$$i_*\colon \pi_1(\sph^n\backslash\Sigma)\to\pi_1(\sph^n)=0$$
	is an isomorphism, since any homotopy of closed curves is generically of dimension two, but $\Sigma=f(S)$ has codimension $\geq 3$. Thus $\pi_1(\sph^n\backslash\Sigma)$ is simply connected. Therefore, $f\colon M\backslash S\to \sph^n\backslash\Sigma$ is a Riemannian isometry and a diffeomorphism. Denote by $f^{-1}\colon \sph^n\backslash\Sigma \to M\backslash S$ its inverse map. The same argument yields that
	$$\dist_{M}(f^{-1}(p),f^{-1}(q))\leq \dist_{\sph^n\backslash\Sigma}(p,q)=\dist_{\sph^n}(p,q),~\forall p,q\in  \sph^n\backslash\Sigma.$$
	Therefore
	$$\dist_{\sph^n}(f(x),f(y))=\dist_M(x,y),~\forall x,y\in M\backslash S.$$
	Since $f\colon M\to \sph^n$ is Lipschitz, we see that $f\colon M\to\sph^n$ is a metric isometry. 	This finishes the proof of the claim.

	Now we choose $x\in M\backslash S$ such that  $df\colon T_xM \to T_{f(x)}\sph^n$ is not an isometry. By the discussion at the beginning of the proof, we have
	$$\Sc_g(x)>\sigma_f(x).$$
	Thus $\Sc_g(y)>\sigma_f(y)$ for any $y$ in a small neighborhood of $x$. As a result, the operator $\mathcal L$ from line \eqref{eq:laplace} is a strictly positive operator.
	
	Let  $\lambda>0$ be the bottom spectrum of $\mathcal L$ and $v$ the associated  eigenfunction. Since $g$ is an $L^\infty$ metric, it follows from the standard elliptic PDE theory (cf. \cite[Corollary 8.11 and Theorem 8.22]{MR1814364}) that  there exists $C>0$ such that  $v\in C^\infty_{\mathrm{loc}}(M\backslash S)\cap C^\alpha(M)$ for some $\alpha\in (0, 1)$. Moreover, by the Harnack inequality (cf. \cite[Theorem 8.20]{MR1814364}), there exists $C>0$ such that 
	$$C^{-1}<v<C.$$
	We define $g_1=v^{\frac{4}{n-2}}g$ and $f_1 \coloneqq f\colon M\to \sph^n$. Since 
	$$\mathcal L(v)=-\frac{4(n-1)}{n-2}\Delta_g(v)+\phi(\Sc_g-\sigma_f)v = \lambda v,$$ it follows that the scalar curvature of $g_1$ on $M\backslash S$ satisfies 
	\begin{align*}
		\Sc_{g_1}=&v^{-\frac{n+2}{n-2}}\Big(-\frac{4(n-1)}{n-2}\Delta_gv+\Sc_gv\Big)
		=v^{-\frac{n+2}{n-2}}\Big(\lambda v-\phi(\Sc_g-\sigma_f)v+\Sc_gv\Big)\\
		\geq &v^{-\frac{n+2}{n-2}}\Big(\lambda v-(\Sc_g-\sigma_f)v+\Sc_gv\Big) = v^{-\frac{4}{n-2}}\Big(\lambda +\sigma_f\Big)\\
		=&\lambda v^{-\frac{4}{n-2}}+\sigma_{f_1}\geq \sigma_{f_1}+\lambda C^{\frac{4}{n-2}}.
	\end{align*}
	where we have used the fact $\phi(x)\leq  x$ and $\sigma_{f_1} = v^{-\frac{4}{n-2}} \sigma_f$. 	This finishes the proof.
\end{proof}

Note that for any $\delta>0$, there exist open sets $V_1$ and $V_2$ with $f(S)\subset V_1\subset V_2$ and a smooth map $h\colon \sph^n\to\sph^n$ with degree one, such that $h(V_1)\subset f(S)$, $|dh|\leq 1+\delta$, and $h=\id$ outside $V_2$.  Considering the composition $$f_2\coloneqq h\circ f_1\colon (M,g')\to \sph^n.$$
We note that
$$|\sigma_{f_1}-\sigma_{f_2}|\leq L^2\big((1+\delta)^2-1\big),$$
where $L<\infty$ is the Lipschitz constant of $f_1$. To summarize, we obtain the following conclusion.
\begin{lemma}\label{lemma:constant}
	With the same assumptions as Theorem \ref{thm:sphere}, if $f\colon (M,g)\to(\sph^n,g_{\sph^n})$ is  not an isometry up to scaling, then there exist $\delta_2>0$,  an $L^\infty$-metric $g_2$ on $M$ and a Lipschitz map $f_2\colon (M,g_2)\to\sph^n$ such that 
	\begin{enumerate}[label=$(\alph*)$]
		\item $g_2$ and $f_2$ are smooth away from $S$,
		\item $f_2(U)\subset f_2(S)$ for some open neighborhood $U$ of $S$,
		\item  $\Sc_{g_2}\geq \sigma_{f_2}+\delta_2$ on $M\backslash S$, where $\sigma_{f_2}(y) = 2|\Bigwedge^2d(f_2)_y|_{\tr}$ for $y\in M\backslash S$.
	\end{enumerate}
\end{lemma}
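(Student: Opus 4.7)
The plan is to combine Lemma \ref{lemma:bound} with the composition argument sketched in the discussion immediately preceding the statement. First I will apply Lemma \ref{lemma:bound} to produce an $L^\infty$ metric $g_1$ (smooth away from $S$) and the map $f_1 := f$ satisfying $\Sc_{g_1} \geq \sigma_{f_1} + \delta_1$ on $M\setminus S$ for some $\delta_1 > 0$. I will then set $g_2 := g_1$ and $f_2 := h \circ f_1$, where $h\colon \sph^n \to \sph^n$ is a carefully chosen smooth degree-one self-map of $\sph^n$. Since $g_1$ already satisfies $(a)$ and a quantitative version of $(c)$ with $f_1$ in place of $f_2$, the remaining task is to design $h$ so that the new map $f_2$ satisfies $(b)$ while degrading $(c)$ only slightly.

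For a small parameter $\delta > 0$ to be fixed later, I will build open neighborhoods $\Sigma = f(S) \subset V_1 \subset V_2$ in $\sph^n$ together with a smooth map $h\colon \sph^n \to \sph^n$ satisfying: $h = \id$ outside $V_2$; $h$ is a retraction of $V_1$ onto $\Sigma$, so that $h(V_1) \subset \Sigma$ and $h|_\Sigma = \id$; $\deg h = 1$; and $|dh| \leq 1 + \delta$ pointwise. The construction will take a regular neighborhood of the finite simplicial complex $\Sigma$, collapse it simplex-by-simplex along the normal ball-bundle direction, and then interpolate back to the identity on the thin shell $V_2 \setminus V_1$ using a radial cut-off function. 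By choosing the inner thickness much smaller than the outer one, the radial expansion in the shell can be kept uniformly bounded by $1+\delta$. The codimension-at-least-three hypothesis on $\Sigma$ is what ensures that there is no linking obstruction to this collapse and that $h$ can be arranged to have degree exactly one.

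With $h$ in hand, $f_2 := h \circ f_1$ is Lipschitz and smooth away from $S$, giving $(a)$. Setting $U := f_1^{-1}(V_1)$, an open neighborhood of $S$, I obtain
\[
f_2(U) = h(f_1(U)) \subset h(V_1) \subset \Sigma = h(\Sigma) = f_2(S),
\]
which is $(b)$. For $(c)$, at any $y \in M\setminus S$ the chain rule together with $|dh| \leq 1 + \delta$ gives $\sigma_{f_2}(y) \leq (1+\delta)^2 \, \sigma_{f_1}(y)$, and since $f_1$ is $L$-Lipschitz, $\sigma_{f_1} \leq n(n-1) L^2$. Hence
\[
\sigma_{f_2} - \sigma_{f_1} \leq n(n-1) L^2 \bigl((1+\delta)^2 - 1\bigr).
\]
I will then choose $\delta$ small enough that this right-hand side is less than $\delta_1/2$, so that $\Sc_{g_2} = \Sc_{g_1} \geq \sigma_{f_1} + \delta_1 \geq \sigma_{f_2} + \delta_1/2$, and take $\delta_2 := \delta_1/2$.

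The main obstacle is the geometric construction of $h$: one must simultaneously collapse a thickening of the possibly singular simplicial complex $\Sigma$ onto $\Sigma$ in a smooth way, keep $|dh|$ uniformly close to $1$, and ensure that the resulting map still has degree one. The codimension $\geq 3$ hypothesis on $\Sigma$ is essential for the last point, since in codimension $2$ nontrivial linking would obstruct such a collapse from having degree one.
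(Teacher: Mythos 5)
Your proposal is correct and follows essentially the same route as the paper: apply Lemma \ref{lemma:bound} to get $(g_1,f_1)$ with $\Sc_{g_1}\geq\sigma_{f_1}+\delta_1$, then postcompose with a smooth degree-one map $h$ that retracts a small neighborhood $V_1$ of $\Sigma=f(S)$ into $\Sigma$, equals the identity outside $V_2$, and satisfies $|dh|\leq 1+\delta$, so that $\sigma_{f_2}-\sigma_{f_1}$ is controlled by $L^2\bigl((1+\delta)^2-1\bigr)$ and $\delta$ can be taken small. (Only minor quibble: the degree-one property of $h$ follows simply from $h=\id$ outside $V_2$ together with $h(V_2)$ missing a point, not from any codimension-three linking consideration; otherwise your construction sketch of $h$ is at least as detailed as the paper's, which merely asserts its existence.)
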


We will later prove Theorem \ref{thm:sphere} by contradiction. For notation simplicity, we omit the subscripts and assume that $f\colon (M,g)\to\sph^n$ satisfies $\Sc_{g}\geq \sigma_{f}+\delta$ on $M\backslash S$, and $f(U)\subset f(S)$ for some open neighborhood $U$ of $S$.

Now we shall apply a conformal change to obtain a complete Riemannian metric on $M\backslash S$ which still satisfies the assumptions of Theorem \ref{thm:sphere}. As a preparation, we need the following lemma regarding Green functions on $M\backslash S$. 

\begin{lemma}\label{lemma:conformaFactor}
	Let $M$ be an $n$-dimensional closed smooth manifold and  $S\subset M$ a finite simplicial complex with codimension $k$. Suppose $g$ is a $L^\infty$ metric on $M$ that is smooth away from $S$. If $k\geq 3$, then there exist a positive smooth function $u$ on $M\backslash S$ and a continuous positive function $\beta\colon S\to [k, n]$ such that $\mathcal L u=0$ on $M\backslash S$
	and 
	\begin{equation}\label{eq:green}
		\frac 1 C d(x,S)^{2-\beta(x)}\leq u(x)\leq Cd(x,S)^{2-\beta(x)}
	\end{equation}
	for some $C>0$. Here $\mathcal L$ is the operator from line \eqref{eq:laplace} and $d(x, S)$ stands for the distance between $x$ and $S$.  
\end{lemma}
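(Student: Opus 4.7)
The plan is to construct $u$ as a subsequential limit of solutions to Dirichlet problems on an exhaustion of $M\backslash S$, using an explicitly built comparison function $\rho$ both to supply boundary data and to pin down the two-sided asymptotic estimate. First I would construct a positive smooth $\rho$ on $M\backslash S$ together with a continuous $\beta\colon S \to [k,n]$ so that $C^{-1} d(x,S)^{2-\beta(x)} \leq \rho(x) \leq C\,d(x,S)^{2-\beta(x)}$. The model comes from the Euclidean fundamental solution of $-\Delta$ transverse to each stratum of $S$: on the interior of a cell of codimension $c \in \{k,\ldots,n\}$, in a tubular chart set $\rho = |y|^{2-c}$ with $y$ the transverse variable, which is harmonic in $\mathbb{R}^c$. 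These local models are patched over nested tubular neighborhoods of the skeleta of $S$ via a partition of unity, with $\beta$ defined by a weighted interpolation of stratum codimensions so that it is globally continuous.

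A direct computation then shows that $\mathcal L \rho$ is of strictly lower order than the leading singular term already cancelled by the transverse Euclidean Laplacian of $|y|^{2-c}$, and the perturbations $\rho_\pm = \rho\,(1 \pm \varepsilon\, d(\cdot,S)^\alpha)$ with small $\alpha>0$ become strict super- and sub-solutions of $\mathcal L$ in a tubular neighborhood of $S$. The hypothesis $k \geq 3$ is used in two essential ways: it forces $2-c \leq -1$ so that the model actually blows up on $S$, and it makes $S$ polar (of zero capacity) for $\mathcal L$, which is what lets the exhaustion procedure converge to a function singular precisely on $S$. Now let $\Omega_j = \{x \in M : d(x,S) > 1/j\}$; for large $j$ these are smooth domains sitting inside the smooth locus $M\backslash S$ of $g$. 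Using the $L^\infty$-coefficient elliptic theory of Gilbarg--Trudinger (Chapter 8), the Dirichlet problem $\mathcal L u_j = 0$ in $\Omega_j$ with $u_j = \rho$ on $\partial \Omega_j$ admits a unique weak solution, which is positive by the maximum principle since the zeroth-order term $\phi(\Sc_g - \sigma_f)$ is nonnegative. Comparing $u_j$ with $\rho_\pm$ yields uniform bounds $C^{-1}\rho \leq u_j \leq C\rho$ independent of $j$, and interior De Giorgi--Nash--Moser estimates furnish local Hölder bounds on compact subsets of $M\backslash S$; a subsequence therefore converges to a function $u$ solving $\mathcal L u = 0$ with the desired two-sided estimate. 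Smoothness of $u$ on $M\backslash S$ then follows from standard elliptic regularity on the smooth locus of $g$.

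The hard part is the step underlying the very construction of $\rho$ and $\beta$: where strata of $S$ of different codimension meet, a naive power-of-distance ansatz has a jump in exponent as $x$ moves between neighborhoods of adjacent strata, destroying the continuity of $\beta$ and producing spurious barrier violations in the transition annuli. Overcoming this requires an adaptive gluing on a nested family of tubular neighborhoods of shrinking radii, in which $\beta$ interpolates smoothly between the codimensions of adjacent strata, and in which one must verify that the partition-of-unity cross terms are of strictly lower order than the cancelled principal Euclidean Laplacian in each transition region. Once this delicate combinatorial--analytic construction of the pair $(\rho,\beta)$ is in place, the Green-function machinery for uniformly elliptic operators with $L^\infty$ coefficients runs largely routinely.
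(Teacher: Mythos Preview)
Your approach differs substantially from the paper's, and the barrier step contains a genuine gap tied precisely to the $L^\infty$ hypothesis on $g$.

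The paper's construction avoids barriers and exhaustions entirely. Since $\mathcal L$ is a divergence-form uniformly elliptic operator with $L^\infty$ leading coefficients and bounded zeroth-order term, the theory of Littman--Stampacchia--Weinberger and Gr\"uter--Widman supplies a global Green function $G(x,y)$ on $M$ with the two-sided bound $G(x,y)\sim d(x,y)^{2-n}$. One then equips $S$ with a measure $\mu$ supported on its locally top-dimensional simplices and sets
\[
u(x)=\int_S G(x,z)\,d\mu(z).
\]
This $u$ is automatically $\mathcal L$-harmonic on $M\setminus S$, and the asymptotic follows by elementary calculus: near a point of $S$ adjacent to maximal simplices of codimensions $\ell_j$, the integral reduces to a sum of model integrals $\int_0^a (r^2+t^2)^{-(n-2)/2}\,t^{\,n-\ell_j-1}\,dt\sim r^{2-\ell_j}$ (using $\ell_j\ge k\ge 3$), and $\beta$ records the locally dominant codimension. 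There is no gluing of local profiles and no comparison principle; the ``hard part'' you isolate simply does not arise.

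The gap in your argument is the assertion that $\mathcal L\rho$ is of strictly lower order than the leading singular term. The transverse Euclidean Laplacian of $|y|^{2-c}$ vanishes, but that cancellation requires $g^{ab}=\delta^{ab}+o(1)$ as one approaches $S$, which the $L^\infty$ hypothesis does not provide: $g$ is smooth away from $S$ but has no prescribed modulus of continuity at $S$. The second derivatives of $|y|^{2-c}$ are of order $|y|^{-c}$, and contracting with a merely bounded inverse metric yields a principal part of size $O(|y|^{-c})$ with no sign. Your correction $\pm\varepsilon\,\rho\,d^\alpha$ contributes at the strictly lower order $O(|y|^{-c+\alpha})$ and therefore cannot dominate, so $\rho_\pm$ fail to be barriers. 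The paper's route sidesteps this because the estimate $G\sim d^{2-n}$ is a black-box consequence of De~Giorgi--Nash--Moser theory valid for arbitrary $L^\infty$ coefficients; one never applies $\mathcal L$ to an explicit test function near the singular set.
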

\begin{proof}
	For notation simplicity, we will write $u(x)\sim d(x,S)^{2-k}$ for the desired inequality \eqref{eq:green} in the lemma.	
	Let $G\colon M\times M\to \R_+\cup\{\infty\}$ be the Green function of $(M,g)$ with respect to the operator $\mathcal L$. In particular, we have that $G$ is smooth on $(M\backslash S)\times (M\backslash S)$, $\mathcal LG=0$ and
	$$G(x,y)\sim d(x,y)^{2-n}$$
	for any $x,y\in M$ as $x\to y$. See \cite[Section 6]{MR0161019} and \cite[Section 1]{MR0657523}. 	
	
	Let us fix a measure  $\mu$ on $S$ as follows. For each simplex $\sigma$ in $S$, let $\widetilde \sigma$ be the simplex that has the  maximal dimension\footnote{This (local) maximal dimension does not necessarily equal to the maximum dimension of all simplices in $S$. For example, think of the case where $S$ is the union of a one-dimensional edge and a two dimensional triangle intersecting at a vertex.  } among all simplices of $S$ containing $\sigma$. In the following,  such a simplex $\widetilde \sigma$ will be said to have maximal intrinsic dimension.  Let us equip each such $\widetilde \sigma$ with the usual Lebesgue measure, and together they give a measure  
	$\mu$ on $S$. 
	
	Denote $G_y(x)=G(x,y)$ for $x\in M\backslash S$ and $y\in S$.	Set
	$$u(x)=\int_S G_z(x)d\mu(z).$$
	By construction, $u$ is positive and smooth on $M\backslash S$ with $\mathcal Lu=0$. Now we estimate the asymptotic behavior of $u(x)$ when $x\to S$. Assume that $d(x,S)=r$, and $y\in S$ is a point at which $d(x,y) = d(x, S)$. Let $B_{S}(y;a) \subset S$ be the metric ball in $S$ centered at $y$ with radius $a>0$. If $a$ is sufficiently small, we have
	$$d(x,z)\sim\sqrt{r^2+d(z, y)^2}$$
	for any $z\in B_{S}(y; a)$. Therefore, 
	\begin{align*}
		u(x)\sim&\int_{B_{S}(y; a)}G(x,z)d\mu(z)\sim \int_{B_{S}(y; a)} d(x,z)^{2-n}d\mu(z)\\
		= & \sum \int_{B_{S}(y; a)\cap \widetilde \sigma_j}  d(x,z)^{2-n}d\mu(z) \sim \sum  \int_0^a\frac{1}{(r^2+t^2)^\frac{n-2}{2}}t^{\dim(\widetilde \sigma_j)-1}dt.
	\end{align*}
	where $\widetilde \sigma_j$ runs through all simplices of $S$ that have maximal intrinsic dimension. 	By assumption,  $S$ has codimension $k$, thus $\dim(\widetilde \sigma_j)\leq n-k$. A straightforward computation shows that 
	\begin{align*}
		\int_0^a\frac{1}{(r^2+t^2)^\frac{n-2}{2}}t^{n-\ell-1}dt=r^{2-\ell}\int_{0}^{\frac a r}\frac{s}{(1+s^2)^{\frac{n-2}{2}}}ds
		\sim\begin{cases}
			1& \textup{ if } \ell =0,1\\
			|\log(r)|& \textup{ if } \ell=2, \\
			r^{2-\ell} &  \textup{ if } \ell \geq 3.
		\end{cases}
	\end{align*}
	Now for each $\widetilde \sigma_j$ as above, if $\dim(\widetilde \sigma_j) = n -\ell$, then $\ell\geq k \geq 3$.  This finishes the proof.
\end{proof}

\begin{proposition}\label{prop:conformal}
	Let $M^n$ ($n\geq 3$) be a closed spin manifold and $S$ a finite simplicial complex embedded in $M$ of codimension $k$. Let $g$ be an $L^\infty$-metric on $M$ that is smooth outside $S$. Let $f\colon (M,g)\to (\sph^n,g_{\sph^n})$ be a Lipschitz map with non-zero degree that is  smooth away from $S$. Suppose that
	\begin{equation}
		\Sc_{g}\geq \sigma_{f}=|\Bigwedge^2df|\cdot n(n-1)
	\end{equation} on $M\backslash S$.
	If $f$ is not an isometry up to scaling and $k\geq\frac{n}{2}+1$, then there exist a complete smooth Riemannian metric $g_\times$ on  $M\backslash S$ and a smooth map $f_\times\colon (M\backslash S,g_\times)\to (\sph^n,g_{\sph^n})$ with non-zero degree such that $$\Sc_{g_\times}>\sigma_{f_\times},$$ 
	and  $f_\times(M_\times\backslash K)$ is contained in $f(S)$ for some compact set $K$ of $M_\times$.
\end{proposition}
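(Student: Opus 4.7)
The plan is to build $g_\times$ and $f_\times$ via a two-step procedure: first reduce to a situation with a uniform strict scalar gap and with $f$ sending a neighborhood of $S$ into $f(S)$, and then perform a conformal blow-up using a Green-type function. For the first step, apply Lemmas \ref{lemma:bound} and \ref{lemma:constant} to replace the given pair $(g,f)$ by a new $L^\infty$ pair, still denoted $(g,f)$, which is smooth on $M\backslash S$ and satisfies $\Sc_g\geq \sigma_f+\delta$ on $M\backslash S$ for some $\delta>0$, together with $f(U)\subset f(S)$ for some open neighborhood $U\supset S$. (If needed, one can also adjust the cutoff $\phi$ in \eqref{eq:laplace} so that $\phi(s)<s$ strictly for every $s>0$; the footnote after \eqref{eq:laplace} grants this freedom.) Then take the positive smooth function $u$ produced by Lemma \ref{lemma:conformaFactor} and set
\[
g_\times := u^{\frac{4}{n-2}} g \quad\text{on}\quad M_\times := M\backslash S, \qquad f_\times := f\big|_{M_\times}.
\]
Both $g$ and $u$ are smooth on $M_\times$, so $g_\times$ is a smooth Riemannian metric and $f_\times$ is a smooth map.

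\textbf{Verifications.} One then needs to check completeness of $g_\times$, the strict scalar inequality, and the condition on $f_\times$ near infinity. For completeness, near a simplex of $S$ of codimension $\ell\geq k$, the asymptotic bound $u(x)\sim d(x,S)^{2-\ell}$ from \eqref{eq:green} makes the $g_\times$-length of a normal segment scale like $r^{(4-2\ell)/(n-2)}\,dr$, so the $g_\times$-distance from a nearby point to $S$ is comparable to $\int_0^{r_0} r^{(4-2\ell)/(n-2)}\,dr$, which diverges exactly when $\ell \geq n/2+1$. The hypothesis $k\geq n/2+1$ is precisely what ensures this divergence at every point of $S$. For the scalar inequality, the standard conformal change formula combined with $\mathcal{L}u = 0$ yields
\[
\Sc_{g_\times} = u^{-\frac{4}{n-2}}\bigl(\Sc_g - \phi(\Sc_g-\sigma_f)\bigr), \qquad \sigma_{f_\times} = u^{-\frac{4}{n-2}}\sigma_f,
\]
so
\[
\Sc_{g_\times}-\sigma_{f_\times} \;=\; u^{-\frac{4}{n-2}}\bigl(\Sc_g-\sigma_f-\phi(\Sc_g-\sigma_f)\bigr) \;>\; 0,
\]
by the choice of $\phi$ together with the strict bound $\Sc_g-\sigma_f\geq \delta>0$. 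Finally, letting $K:=M\backslash U$, which is compact in $M_\times$ since $U\supset S$, one has $M_\times\backslash K = U\backslash S$, hence $f_\times(M_\times\backslash K)\subset f(U)\subset f(S)$; and any regular value $y\in \sph^n\backslash f(S)$ has all of its preimages under $f_\times$ in the compact set $K$, so the signed count gives $\deg f_\times = \deg f \neq 0$.

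\textbf{Main obstacle.} The delicate step is the completeness of $g_\times$: one must verify that the Green-type conformal factor $u$ blows up fast enough near every stratum of $S$ to push $S$ off to metric infinity. The asymptotic estimate \eqref{eq:green}, with the exponent function $\beta$ lying in $[k,n]$, is exactly what drives the argument, and the codimension bound $k\geq n/2+1$ is sharp for the resulting integral-divergence test. Once completeness is in hand, the strict scalar inequality and the stated properties of $f_\times$ drop out of the construction of $\mathcal{L}$ and the conformal transformation law for scalar curvature.
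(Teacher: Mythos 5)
Your construction follows the paper's route (reduce via Lemmas \ref{lemma:bound} and \ref{lemma:constant}, then blow up conformally with the Green-type function of Lemma \ref{lemma:conformaFactor}, with completeness coming from $k\geq \frac n2+1$), but with one substantive deviation: you take the conformal factor $u^{\frac{4}{n-2}}$, whereas the paper takes $(1+u)^{\frac{4}{n-2}}$. This difference is not cosmetic. With the paper's $\phi$, which \emph{equals} the identity on $[0,10]$, your identity
\[
\Sc_{g_\times}-\sigma_{f_\times}=u^{-\frac{4}{n-2}}\bigl(\Sc_g-\sigma_f-\phi(\Sc_g-\sigma_f)\bigr)
\]
vanishes at every point where $\Sc_g-\sigma_f\leq 10$ (and after Lemma \ref{lemma:constant} one only knows $\Sc_g-\sigma_f\geq\delta$ with $\delta$ small), so the strict inequality you assert fails; the adjustment of $\phi$ you mention parenthetically is therefore \emph{necessary}, not optional. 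Moreover the footnote after \eqref{eq:laplace} does not literally grant it: the ``listed properties'' include $\phi(s)=s$ for $s\leq 10$, which is exactly what you must give up. The fix is legitimate mathematically — a smooth, non-decreasing, bounded $\phi$ with $0<\phi(s)<s$ for all $s>0$ works, since Lemmas \ref{lemma:bound}, \ref{lemma:constant} and \ref{lemma:conformaFactor} only use $0\leq\phi\leq\mathrm{id}$, boundedness, and positivity of the potential somewhere — but you must say this and carry the modified $\phi$ consistently through those lemmas rather than cite them as stated. The paper sidesteps the issue entirely with the factor $1+u$: since $\mathcal L u=0$ and $\phi\leq\mathrm{id}$, one gets
\[
\Sc_{g_\times}\;\geq\;\sigma_{f_\times}+\frac{\Sc_g-\sigma_f}{(1+u)^{\frac{n+2}{n-2}}}\;\geq\;\sigma_{f_\times}+\frac{\delta}{(1+u)^{\frac{n+2}{n-2}}},
\]
so the strict gap comes from the ``$1$'' and no change to $\phi$ is needed. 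A minor further point for your version: with factor $u$ alone you should also observe that $\inf_{M\backslash S}u>0$ (true, since $u$ is positive, continuous and blows up near $S$), so that $g_\times$ dominates a fixed multiple of $g$; this is what turns ``$d_{g_\times}(\cdot,S)=\infty$'' into genuine completeness, and it is automatic for $1+u\geq 1$. The remaining verifications (completeness exponent, $\sigma_{f_\times}=u^{-\frac{4}{n-2}}\sigma_f$, the choice $K=M\backslash U$, nonvanishing degree) match the paper and are fine.
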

\begin{proof}
	By Lemma \ref{lemma:constant}, we may assume  without loss of generality  that $\Sc_{g}\geq \sigma_{f}+\delta$ on $M\backslash S$, and $f(U)\subset f(S)$ for some open neighborhood $U$ of $S$.
	
	Set
	$$g_{\times}=(1+u)^{\frac{4}{n-2}}g,$$
	where $u$ is the function constructed in Lemma \ref{lemma:conformaFactor}. In particular, on $M\backslash S$, we have
	$$0=\mathcal Lu=-\frac{4(n-1)}{n-2}\Delta_gu+\phi(\Sc_g-\sigma_f )u.$$
	Denote by $M_\times$ the open manifold $(M\backslash S,g_\times)$ and $f_\times \coloneqq f \colon  (M_\times, g_\times)\to (\sph^n,g_{\sph^n})$.
	The scalar curvature of $g_\times$ is given by
	$$\Sc_{g_\times}=(1+u)^{-\frac{n+2}{n-2}}\Big(-\frac{4(n-1)}{n-2}\Delta_g(1+u)+\Sc_g(1+u)\Big).$$
	It follows that
	\begin{align*}
		\Sc_{g_\times}=&(1+u)^{-\frac{n+2}{n-2}}\Big(-\phi(\Sc_{g}-\sigma_f)u+\Sc_g+\Sc_gu\Big)\\
		\geq &\frac{\Sc_g+\sigma_fu}{(1+u)^{\frac{n+2}{n-2}}}
		=\sigma_{f_\times}+\frac{\Sc_g-\sigma_f}{(1+ u)^{\frac{n+2}{n-2}}}\\
		\geq& \sigma_{f_\times}+\frac{\delta}{(1+u)^{\frac{n+2}{n-2}}}.
	\end{align*}
	Here we have used the fact that $\sigma_{f_\times}=\sigma_f(1+u)^{-\frac{4}{n-2}}$.
	
	Now it remains to show  $(M_\times, g_\times)$ is complete. Let $x\in M\backslash S$ be a point with  $d_g(x,S)=r$ for  some sufficiently small $r>0$, and $y\in S$ a point such that $d_g(x, y) = d_g(x, S)$. Here $d_g$ stands for the distance function with respect to the metric $g$. To show  $g_\times$ is complete, it suffices to show $d_{g_\times}(x,y)=\infty$ for all such $x$ and $y$. Note that along the any path $\gamma$ from $x$ to $y$, we have 
	$$u(z)\geq C d_g(z, y)^{2-k}$$
	for some $C>0$ and all $z\in \gamma$. 
	Therefore,
	$$d_{g_\times}(x,y)\geq C\int_0^r (1+t^{2-k})^{\frac{2}{n-2}}dt\sim\int_0^r \frac{1}{t^{\frac{2(k-2)}{n-2}}}dt.$$
	Note that if $\frac{2(k-2)}{n-2}\geq 1,$ or equivalently, $k\geq \frac n 2+1$,   then   
	\[   \int_0^r \frac{1}{t^{\frac{2(k-2)}{n-2}}}dt = \infty.\]
	This finishes the proof.
\end{proof}
\subsection{Proof of Theorem \ref{thm:sphere}}
In this subsection, we prove Theorem \ref{thm:sphere} by using the conformal metric from Proposition \ref{prop:conformal}.

The notion of subsets of spheres with wrapping property was introduced in \cite{Xie:2021tm} for studying related scalar curvature rigidity questions on spheres.
\begin{definition}
	Let $\Sigma$ be a closed subset of $\sph^n$. We say that $\Sigma$ satisfies the wrapping property if there exists a $1$-Lipschitz map $h\colon \sph^n\to\sph^n$ such that
	\begin{enumerate}
		\item $\deg(h)\ne 1$, and
		\item when restricted to any connected component of $\Sigma$, $h$ is equal to an isometry in $\SO(n+1)$.
	\end{enumerate}
\end{definition}

Roughly speaking, the wrapping property of a subset in $\sph^n$ guarantees the existence of two vector bundles over $\sph^n$ that represent different multiples of the fundamental class in $H^n(\sph^n)$, but agree on $\Sigma$.
Here are some examples of subsets of spheres that have with the wrapping property.
\begin{lemma}\label{lemma:folding}
	If $\Sigma$ is a closed subset of $\sph^n$ contained in an open hemisphere, then $\Sigma$ satisfies  the wrapping property.
\end{lemma}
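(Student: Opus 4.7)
The plan is to construct $h$ by explicit \emph{folding}: collapse the closed lower hemisphere onto the closed upper one by reflecting across the equator, while leaving the closed upper hemisphere fixed. After applying a rotation of $\sph^n$, I may assume that $\Sigma$ lies in the open upper hemisphere $\sph^n_+=\{x_{n+1}>0\}$. Let $\rho\in O(n+1)$ be the reflection $(x_1,\dots,x_n,x_{n+1})\mapsto (x_1,\dots,x_n,-x_{n+1})$ across the equator $\equator=\{x_{n+1}=0\}$, and define
\[
h(x)=\begin{cases} x & \text{if } x_{n+1}\geq 0,\\ \rho(x) & \text{if } x_{n+1}\leq 0.\end{cases}
\]
The two cases agree on $\equator$ since $\rho$ fixes $\equator$ pointwise, so $h$ is continuous.

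I then need to verify the three requirements of the wrapping property. For the $1$-Lipschitz condition, the only non-trivial case is $x_{n+1}>0>y_{n+1}$. Here the minimizing geodesic in $\sph^n$ from $x$ to $y$ meets $\equator$ at some point $p$, so $d(x,y)=d(x,p)+d(p,y)$. Since $\rho$ is an isometry fixing $p$, we have $d(p,y)=d(p,\rho(y))$, and the triangle inequality yields
\[
d(h(x),h(y))=d(x,\rho(y))\leq d(x,p)+d(p,\rho(y))=d(x,y),
\]
as required. In the remaining cases $h$ either equals the identity or equals $\rho$, each of which is an isometry.

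For the degree, observe that $h$ factors through the closed upper hemisphere, which is contractible, so $\deg(h)=0\neq 1$. Finally, since $\Sigma\subset\sph^n_+$, every connected component of $\Sigma$ sits in the open upper hemisphere, where $h$ coincides with the identity element of $\SO(n+1)$; this verifies the last condition. Assembling these three points completes the proof.

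The construction is elementary and I do not expect any real obstacle; the only point requiring care is the $1$-Lipschitz verification across the equator, which is handled by the triangle-inequality argument above. No smoothness of $h$ is needed, since the wrapping property only demands a $1$-Lipschitz map.
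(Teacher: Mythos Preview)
Your proof is correct and follows essentially the same approach as the paper: both construct the folding map that is the identity on one closed hemisphere and the reflection across the equator on the other. You simply provide more detail than the paper (which just asserts the map is ``clearly'' $1$-Lipschitz of degree zero), in particular the triangle-inequality argument for the Lipschitz bound across the equator and the observation that $h$ factors through a contractible set.
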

\begin{proof}
Without loss of generality, assume that $\Sigma\subset \sph^n$ is a closed set that is contained in the hemi-sphere $\{(x_1,x_2,\ldots,x_{n+1})\in \R^{n+1}:x_1>0\}$. Let $h$ be the wrapping map 
	$$h(x_1,x_2,\ldots,x_{n+1})\coloneqq \begin{cases}
		(x_1,x_2,\ldots,x_{n+1})& \textup{ if } x_1\geq 0,\\
		(-x_1,x_2,\ldots,x_{n+1})& \textup{ if } x_1<0.
	\end{cases}$$
	Clearly the map $h$ is a $1$-Lipschitz that has degree zero. Moreover, $h|_\Sigma$ is equal to the identity map. It follows that $\Sigma$ has wrapping property.
\end{proof}

By repeatedly applying wrapping maps such as $h$ above,  one can show that any finite subset of $\sph^n$ has the wrapping property. The proof is elementary, but a bit tedious. We defer it to the Appendix  \ref{app:wrapping}, cf. \cite{Xie:2021tm}.

Now we are ready to prove  Theorem \ref{thm:sphere}.
\begin{proof}[Proof of Theorem \ref{thm:sphere}]
	We first consider the case when the dimension $n$ is even.
	Assume on the contrary that $f\colon M\to\sph^n$ is not an isometry up to scaling. By Proposition \ref{prop:conformal}, there is a complete Riemannian manifold $(M_\times,g_\times)$ together with nonzero degree map $f_\times\colon M_\times\to\sph^n$ such that $\Sc_{g_\times}>\sigma_{f_\times} = 2|\Bigwedge^2df_\times|_{\tr}$, and  $f_\times(M_\times\backslash K)$ is contained in $f(S)$ for a compact set $K$ of $M_\times$.  
	
	By assumption, $\Sigma = f(S)$ satisfies the wrapping property.  Let $h\colon \sph^n\to\sph^n$ be the $1$-Lipschitz map for the set $\Sigma$ from Definition \ref{def:wrapping}. Consider the map $h\circ f_\times\colon M_\times\to\sph^n$. Since $h$ is $1$-Lipschitz, we still have $\Sc_{g_\times}>\sigma_{h\circ f_\times}$. 
	
	
	We define
	$$E_1=S(TM_\times)\otimes f_\times^*S(T\sph^n),$$
	and
	$$E_2=S(TM_\times)\otimes (h\circ f_\times)^*S(T\sph^n).$$
	Let $E=E_1\oplus  E_2$ on $M_\times$.	Denote by $\overbar c$ and $c$ the Clifford action of $TM_\times$ and $T\sph^n$, respectively, and $\nabla$ the spinorial connection on $E$.
	
	The grading operator ${\mathscr E}$ on $E$ is given by 
	\[ \mathscr E\coloneqq \begin{pmatrix}
		\mathscr E_1 & 0 \\ 
		0 & -\mathscr E_2
	\end{pmatrix}, \]
	where $\mathscr E_i$ is the natural grading on $E_i$. We remark that $- \mathscr E_2$ is the negative of grading operator on $E_2$. In other words, we have reversed the $\pm$ parts of $E_2$.  Consider the Dirac operator 
	\[  D = \begin{pmatrix}
		D_1 & 0 \\
		0 & -D_2
	\end{pmatrix},\]
	where $D_i$ be the associated Dirac operator on $E_i$.  
	
	Let $\{Y_\ell\}_{1\leq \ell \leq m}$ be the connected components of  $M_\times\backslash K$. By Definition \ref{def:wrapping}, since $h$ is an isometry in $\SO(n+1)$ on each connected component of $f(S)$, it induces a bundle isometry 
	$V_\ell\colon E_1\to E_2$ over $Y_\ell$ for each $1\leq \ell\leq m$. Consider the operator 
	\[ U_\ell = \begin{pmatrix}
		0 & V_\ell^\ast \\ 
		V_\ell & 0 
	\end{pmatrix}\]
	on $E$ over $Y_\ell$. By construction, we have  
	$$U_\ell^2=1, ~\mathscr E U_\ell = - U_\ell \mathscr E, \text{ and } U_\ell D+  D U_\ell=0.$$
	
	For each  $1\leq \ell\leq m$, we choose a smooth truncation function $\chi_\ell$ on $M_\times$ such that $\chi_\ell$ is supported in $Y_\ell$, and $\chi_\ell|_{Z_\ell}\equiv 1$ for some $Z_\ell\subset Y_\ell$ such that $Y_\ell-Z_\ell$ is compact.   For any $\varepsilon>0$, we define
	\begin{equation}\label{eq:diracpotential}
B\coloneqq  D+\varepsilon\sum_{\ell=1}^m\chi_\ell  U_\ell.
	\end{equation}
	Since 
	$$B^2= D^2+\varepsilon^2\sum_{\ell=1}^m\chi_\ell^2+\varepsilon\sum_{\ell=1}^m[D,\chi_\ell] U_\ell$$
	 is invertible outside a compact subset of $M_\times$, the operator $B$ is Fredholm. Furthermore, it follows from the relative index theorem that
	$$\ind(B)=2(\deg f-\deg (h\circ f))=2(1-\deg h)\deg f\ne 0.$$
	See Lemma \ref{lemma:fredholm} below.
	
	On the other hand, 
	the Bochner--Lichnerowicz--Weitzenb\"{o}ck formula \cite[II, Theorem 8.17]{spingeometry} yields that
	$$ D_1^2=\nabla^*\nabla+\frac{\Sc_{g_\times}}{4}-\frac 1 2\sum_{i\ne j}\overbar c(e_i)\overbar c(e_j)\otimes c((f_{\times})_\ast e_i\wedge  (f_{\times})_\ast e_j)$$
	and 
	$$ D_2^2=\nabla^*\nabla+\frac{\Sc_{g_\times}}{4}-\frac 1 2\sum_{i\ne j}\overbar c(e_i)\overbar c(e_j)\otimes c((h\circ f_{\times})_\ast e_i\wedge  (h\circ f_{\times})_\ast e_j)$$
	for a local orthonormal basis $\{e_i\}$ of $TM_\times$, where the Clifford action of $2$-forms is given by 
	$$c(u\wedge v)=\frac{c(u)c(v)-c(v)c(u)}{2},~u,v\in T\sph^n.$$
	It follows that 
	$$D^2\geq \frac{\Sc_{g_\times}-\sigma_{ f_\times}}{4}$$
where we have used the fact that 
	$\sigma_{h\circ f_\times}\leq \sigma_{f_\times}$.
	
	Note that $[D,\chi_\ell]=\overbar c(\nabla\chi_\ell )$. We obtain that
	$$B^2\geq \frac{\Sc_{g_\times}-\sigma_{ f_\times}}{4}+\varepsilon^2\sum_{\ell=1}^m\chi_\ell^2-\varepsilon\sum_{\ell=1}^m|\nabla\chi_\ell|.$$
	By construction, each $\nabla\chi_\ell$ is   supported on the compact set $Y_\ell - Z_\ell$. Since by assumption $\Sc_{g_\times}-\sigma_{f_\times}>0$ on $M_\times$,  it follows that there exists  $\delta>0$ such that $\Sc_{g_\times}-\sigma_{f_\times} > \delta$ on the compact closure of  $M_\times-\bigcup_{\ell=1}^m Z_\ell$. Hence there exists $\varepsilon>0$ such that
	\begin{equation}\label{eq:lowerbound}
		\frac{\Sc_{g_\times}-\sigma_{f_\times}}{4}-\varepsilon\sum_{\ell=1}^m|\nabla\chi_\ell|\geq \frac{\delta}{8}
	\end{equation}
	on the closure of $M_\times-\bigcup_{\ell =1}^m Z_\ell$. Now on each $Z_\ell$, we have $B^2\geq\varepsilon^2$,  since $\nabla\chi_\ell=0$ and $\Sc_{g_\times}-\sigma_{f_\times}\geq 0$.
	It follows that $B^2\geq\min\{\delta/8,\varepsilon^2\}>0$, in particular, $B$ is invertible hence has $\ind(B) = 0$. This contradicts the fact that $\ind(B)\neq 0$, hence finishes the proof for the even dimensional case. 
	
		The odd dimensional case follows by  the a similar argument except the relative index theory computation uses spectral flow instead, cf. \cite{MR4739559}. 
		 More precisely, for the odd dimensional case, we consider $E\coloneqq E_1 \oplus E_2$ with  
		 $$ E_1= S(TM_\times)\otimes  f_{\times}^*S^+(T\R^{n+1}) \textup{ and } E_2= S(TM_\times)\otimes (h\circ f_{\times})^*S^+(T\R^{n+1}) ,$$
		 where $S^+(T\R^{n+1})$ is the positive part of the spinor bundle of the trivial bundle $T\R^{n+1}$ over $\sph^n$. Let $\partial_r$ be the unit normal vector of $\sph^n\subset\R^{n+1}$. Consider the family of connections $\{\nabla^s\}_{s\in [0, 1]}$ on $E$ over $ M_\times$ given by
		 $$\nabla_X+s\otimes c(\partial_r)c( (f_{\times})_\ast X) \textup{ on } E_1$$
		 and $$\nabla_X+s\otimes c(\partial_r)c((h\circ f_{\times})_\ast X) \textup{ on } E_2$$
		 for all $X\in TM_\times$, where $\nabla$ is the natural spinorial connection on $E$. Clearly, the connection $\nabla^s$ preserves the Riemannian metric on $E$, namely, 
		 $$X\langle\sigma_1,\sigma_2\rangle=\langle\nabla^s_X\sigma_1,\sigma_2\rangle+\langle\sigma_1,\nabla^s_X\sigma_2\rangle,~\forall \sigma_1,\sigma_2\in C^\infty( E)\textup{ and } X\in TM_\times, $$
		 and preserves the Clifford action, namely, 
		 $$\nabla^s_X\big(c(Y)\sigma\big)=c(\nabla_X^{\text{LC}} Y)\sigma+c(Y)\nabla^s_X\sigma,~\forall \sigma\in C^\infty(E) \textup{ and } X,Y\in TM_\times,$$
		 where  $\nabla^{\text{LC}}$ is the Levi-Civita connection on $ M_\times$. By applying the fact that 
		 \[ \nabla_X(c(\partial_r)) = c((f_{\times})_*X) \textup{ on } E_1 \textup{ and } \nabla_X(c(\partial_r)) = c((h\circ f_{\times})_*X) \textup{ on } E_2,  \] a direct computation shows that  the curvature operator of $\nabla^s$ is given by
		 $$R^s(e_i,e_j)=R(e_i,e_j)-2s(1-s)\otimes c((f_{\times})_*e_i\wedge (f_{\times})_* e_j) \textup{ on } E_1$$
		 and 
		 $$R^s(e_i,e_j)=R(e_i,e_j)-2s(1-s)\otimes c((h\circ f_{\times})_*  e_i\wedge (h\circ f_{\times})_* e_j) \textup{ on } E_2, $$
		 where $R$ is the curvature of $\nabla$.
		 
		 Consider the Dirac operator 
		 $$ D_1^s=\sum_{i=1}^{n}\overbar c(e_i)\nabla_{e_i}^s$$
		 acting on $E_1$ over $M_\times$, and similarly $D_2^s$ acting on $E_2$. 
		 
		 Let 
		 \[ D^s = \begin{pmatrix}
		 	D_1^s & 0 \\ 0 & -D_2^s
		 \end{pmatrix} \]
		  Similar to the even dimensional case above, since $h$ is an isometry in $\SO(n+1)$ on each connected component of $f(S)$, $h$ induces a bundle isometry of $S^+(T\R^{n+1})$ to $S^+(T\R^{n+1})$, which in turn gives a bundle isometry 
		 $V_\ell\colon E_1\to E_2$ over $Y_\ell$ for each $1\leq \ell\leq m$. Consider the operator 
		 \[ U_\ell = \begin{pmatrix}
		 	0 & V_\ell^\ast \\ 
		 	V_\ell & 0 
		 \end{pmatrix}\]
		 on $E$ over $Y_\ell$.
		 Set
		 	$$B^s\coloneqq  D^s+\varepsilon\sum_{l=1}^m\chi_\ell  U_\ell.$$
		 By construction, we have $U_\ell D^s+  D^s U_\ell=0$.

		 Now it follows from the Bochner--Lichnerowicz--Weitzenb\"{o}ck formula  that 
		 $$(D_1^s)^2=\nabla^{s*}\nabla^s+\frac{\Sc_{g_\times}}{4}-s(1-s)\sum_{i\ne j}\overbar c(e_i)\overbar c(e_j)\otimes  c((f_{\times})_*e_i\wedge (f_{\times})_* e_j) $$
		 and 
		 $$(D_2^s)^2=\nabla^{s*}\nabla^s+\frac{\Sc_{g_\times}}{4}-s(1-s)\sum_{i\ne j}\overbar c(e_i)\overbar c(e_j)\otimes c((h\circ f_{\times})_*  e_i\wedge (h\circ f_{\times})_* e_j).$$ 
		 By a similar argument as in the even dimensional case, we see that  $B^s$ is invertible for all $s\in[0,1]$. However, the spectral flow of the family $\{B^s\}_{s\in [0, 1]}$ is nonzero. which follows for example by the argument from \cite{MR4739559} combined together the argument in  Lemma \ref{lemma:fredholm} below. This leads to a contradiction and completes the proof of the odd dimensional case.	
\end{proof}

Let us now prove Corollary \ref{coro:point&tree}.

\begin{proof}[Proof of Corollary \ref{coro:point&tree}] We prove the corollary case by case. 
	
	\textbf{Case (1).} If $\Sigma = f(S)$ is contained in an open hemi-sphere, then $\Sigma$ has the wrapping property by Lemma \ref{lemma:folding}, thus the conclusion immediately follows from Theorem \ref{thm:sphere}.
	Suppose that $\Sigma$ is contained in a closed hemi-sphere and $f$ is not an isometry up to scaling. Let $\delta_2>0$ and $f_2\colon (M,g_2)\to \sph^n$ be as in Lemma \ref{lemma:constant}. We parametrize $\sph^n$ in polar coordinates $(r,\Theta)$ with $0\leq r\leq \pi$ and $\Theta\in\sph^{n-1}$. Let $\rho\colon [0,\pi]\to [0, \pi]$ be a smooth function such that
	\begin{itemize}
		\item $\rho(r)\equiv r$ near $0$ and $\pi$, $0<\rho'<1+\delta_2'$, and 
		\item $\rho(\pi/2)<\pi/2$.
	\end{itemize}
	 Consider the following map
	$$h\colon \sph^n\to\sph^n,~(r,\Theta)\mapsto (\rho(r),\Theta).$$
	Then $f_2'\coloneqq h\circ f_2\colon (M,g_2)\to \sph^n$ still satisfies the inequalities as in Lemma \ref{lemma:constant}, and $\Sigma'\coloneqq f_2'(S)$ is contained in an open hemi-sphere.
	 In particular, Proposition \ref{prop:conformal} and the same proof of Theorem \ref{thm:sphere} still apply to give the conclusion. 
	
	\textbf{Case (2).} By Proposition \ref{prop:wrapfin}, each finite subset of $\sph^n$ satisfies the wrapping property. Hence this case follows immediately from Theorem \ref{thm:sphere}.
	
	\textbf{Case (3).} Let us consider the case where $\Sigma$ is a finite union of trees.  Suppose that $f$ is not an isometry up to scaling. By Proposition \ref{prop:conformal}, there exists a complete manifold $(M_\times,g_\times)$ and $f_\times\colon (M_\times,g_\times)\to(\sph^n,g_{\sph^n})$, such that $\Sc_{g_\times}>\sigma_{f_\times}$. Furthermore, there is a compact set $K\subset M_\times$ such that $f_\times(M_\times\backslash K)$ lies in  $\Sigma$.
	
	Since $\Sigma$ is a disjoint union of trees, there exists a smooth map 
	$$H\colon [0,+\infty)\times \Sigma\to \Sigma$$
	such that $H(t,\cdot)$ is the identity map for $t\leq 1$, and $H(t,\cdot)$ is a locally constant map for $t\geq 2$. Let $r$ be the smooth non-negative function on $M_\times$ that is almost the distance from $K$. We define a new map $\widetilde f_\times\colon M_\times\to\sph^n$ by
	$$\widetilde f_\times(x)=\begin{cases}
		f_\times(x)& \textup{ if } x\in K,\\
  H(r(x),f_\times(x))& \textup{ if } x\in M_\times \backslash K. 
	\end{cases}$$ 
	By construction, $\widetilde f_\times$ is a locally constant map outside a compact set $\widetilde K$, where $\widetilde K$ is  the $2$-neighborhood of $K$. Since $\Sigma$ is one dimensional, we have  $\sigma_{\widetilde f_\times}= \sigma_{f_\times}=2|\Bigwedge^2d(\widetilde f_\times)|_{\tr}= 0$ on $M_\times\backslash K$. In particular, we still have $\Sc_{g_\times}>\sigma_{\widetilde f_\times}$. So we have reduced this case to Case (2), the case where $\Sigma$ is a finite set. This completes the proof
\end{proof}

Let us finish this section with the following lemma of index computation.

\begin{lemma}\label{lemma:fredholm}
	For any $\varepsilon>0$, the operator 
	$$B\coloneqq  D+\varepsilon\sum_{\ell=1}^m\chi_\ell  U_\ell$$
	from line \eqref{eq:diracpotential}  is a Fredholm operator with \[  \ind(B)=2(\deg(f)-\deg(hf)). \]
\end{lemma}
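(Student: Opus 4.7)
I would prove the lemma in two stages, corresponding to the Fredholm property and the explicit index formula.

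The Fredholm property is already implicit in the squared-operator computation carried out in the proof of Theorem~\ref{thm:sphere}. Indeed, $U_\ell^2 = 1$ together with the anticommutation $U_\ell D + D U_\ell = 0$ on the support of $\chi_\ell$ give
\[
B^2 = D^2 + \varepsilon^2 \sum_{\ell=1}^m \chi_\ell^2 + \varepsilon \sum_{\ell=1}^m \overbar c(\nabla \chi_\ell) U_\ell.
\]
The last term is compactly supported in $\bigcup_\ell (Y_\ell \setminus Z_\ell)$, while on each $Z_\ell$ (where $\chi_\ell \equiv 1$) the operator $B^2$ is bounded below by $\varepsilon^2$. Combined with the completeness of $(M_\times, g_\times)$, this uniform positivity outside a compact set implies that $B$ has empty essential spectrum near zero and is therefore Fredholm (cf.\ \cite{MR4739559}).

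For the index, my plan is a relative-index / excision argument reducing the calculation to the closed manifold $(M,g)$. The codimension hypothesis $\codim S \geq \tfrac{n}{2}+1 \geq 3$ implies that $S$ carries vanishing $H^1$-capacity, so spinor Sobolev spaces on $M_\times$ and $M$ coincide after the natural identification. Using the conformal relation $g_\times = (1+u)^{4/(n-2)} g$ together with the conformal covariance of the Dirac operator, one identifies $\ind(B)$ on $(M_\times, g_\times)$ with the index of an extended operator $\widetilde B = \widetilde D_M + \varepsilon \widetilde \Phi$ on the closed manifold $(M,g)$, where $\widetilde D_M \coloneqq D_1 \oplus (-D_2)$ acts on $E_1 \oplus E_2$ and $\widetilde \Phi$ is the bounded zero-order extension of $\sum_\ell \chi_\ell U_\ell$ across $S$. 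Since $\varepsilon \widetilde \Phi$ is a bounded self-adjoint zero-order perturbation on a closed manifold, homotopy invariance ($t\mapsto D_M + t\varepsilon\widetilde\Phi$ stays Fredholm) gives $\ind(B) = \ind(\widetilde D_M) = \ind(D_1) - \ind(D_2)$.

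The individual indices are then evaluated by the Atiyah--Singer index theorem on $(M,g)$: the classical Llarull-type characteristic-class calculation gives $\ind(D_i) = 2\deg(g_i)$ for $g_1 = f$ and $g_2 = h \circ f$, using that $\operatorname{ch}(S^+(T\sph^n) - S^-(T\sph^n))$ integrates to $\chi(\sph^n) = 2$ on $\sph^n$ (and that $\hat A(TM)$ contributes only in degree zero after pairing with an $n$-form pulled back from $\sph^n$). Subtracting yields the desired formula $\ind(B) = 2(\deg f - \deg(h \circ f))$.

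The main obstacle is the second step: rigorously justifying the reduction from the non-compact $(M_\times, g_\times)$ to the closed $(M,g)$, despite the $L^\infty$ regularity of $g$ and the presence of the singular set $S$. This passes through a capacity argument leveraging $\codim S \geq \tfrac{n}{2}+1$, together with the extension of elliptic theory for Dirac operators with $L^\infty$ coefficients; the analogous reduction was carried out in detail by Chu-Lee-Zhu \cite{ChuLeeZhu24} in the special case $\dim S = 0$, and their analytic machinery should adapt with only cosmetic changes to the present setting.
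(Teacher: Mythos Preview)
Your Fredholm argument is correct and essentially identical to the paper's.

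For the index, however, your proposed reduction has a genuine gap. Under the unitary $L^2(M_\times, g_\times)\cong L^2(M\setminus S, g)$ induced by the conformal change $g_\times=(1+u)^{4/(n-2)}g$, the operator $D_{g_\times}$ does not correspond to $D_g$ but to $\phi^{-1/2}D_g\,\phi^{-1/2}$ with $\phi=(1+u)^{2/(n-2)}$. Since $\phi\to\infty$ near $S$, the principal symbol of the transferred operator \emph{degenerates} there; what you obtain on $(M\setminus S,g)$ is not of the form ``$D_g$ plus bounded zero-order term,'' and the $H^1$-capacity argument alone does not yield the index identification you assert. There is a second obstacle: a Dirac operator on $(M,g)$ with merely $L^\infty$ metric is not obviously well-defined, since the spin connection requires first derivatives of $g$ (unlike the divergence-form Laplacian $\Delta_g$ used in Section~\ref{sec:Green}). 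Your deferral to \cite{ChuLeeZhu24} does not close either gap, and calling the needed analysis ``cosmetic'' understates the difficulty.

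The paper sidesteps both issues entirely via the Gromov--Lawson relative index theorem \cite{GromovLawson}. It cuts $M_\times$ along the boundary of a small neighborhood $\mathcal N$ of $S$ in $M$ and caps each piece with a \emph{smooth} copy of $\mathcal N$ (equipped with a smooth extension of the metric and a smooth approximation of $f$), producing a closed manifold $\widetilde M$ and a complete manifold $\widetilde Z$. The relative index theorem gives $\ind(B)=\ind(B_{\widetilde M})-\ind(B_{\widetilde Z})$; the second term vanishes after a linear homotopy to the invertible operator $D_{\widetilde Z}+\sum_\ell U_\ell$, and the first equals $\chi(\sph^n)\bigl(\deg f-\deg(hf)\bigr)$ by Atiyah--Singer on the smooth closed manifold $\widetilde M$. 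No index theory on a singular or $L^\infty$-metric space is ever required.
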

\begin{proof}
	Note that
	$$B^2= D^2+\varepsilon^2\sum_{\ell =1}^m\chi_\ell^2+\varepsilon\sum_{\ell=1}^m[D,\chi_\ell] U_\ell.$$
	By construction, $\chi_\ell\equiv 1$ and $[D,\chi_\ell]=\overbar c(\nabla\chi_\ell)=0$ on $Z_\ell$. Therefore, $B^2\geq \varepsilon^2$ outside the compact set $M_\times\backslash\cup_{\ell=1}^m Z_\ell$. Thus $B$ is Fredholm.
	
	By construction, $B$ is an odd operator with respect to the grading $\mathscr E$. Let us compute the index of $B$ by the relative index theorem of Gromov and Lawson \cite{GromovLawson}. Recall that the sets $Z_\ell$ are connected components of $\mathcal N\backslash S$, where $\mathcal N$ is some small neighborhood of $S$ in $M$. In particular,  $\mathcal N$ itself is a smooth manifold with smooth boundary. 
	
	Let us define 
	$$\widetilde Z\coloneqq (\cup_{\ell=1}^m Z_\ell)\bigcup_{\partial \mathcal N } \mathcal N  \textup{ and } \widetilde M\coloneqq\big( M_\times\backslash\cup_{\ell=1}^m Z_\ell \big)\bigcup_{\partial \mathcal N} \mathcal N.$$
	Extend the metric $g_\times$ on $\cup_{\ell=1}^m Z_\ell$ (resp. $M_\times\backslash\cup_{\ell=1}^m Z_\ell$) to a Riemmanian metric $g_1$ (resp. $g_2$) on $\widetilde Z$ so that  $g_1$ (resp. $g_2$) is  smooth on $\mathcal N$. Let  $\widetilde f_{\mathcal N}\colon \mathcal N\to \sph^n$ be a smooth approximation of the original map $f$ that agrees with $f$ near $\partial \mathcal N$. Then $\widetilde f$ together with $f_\times$ defines a map $f_1\colon \widetilde Z \to \sph^n$ (resp. $f_2\colon \widetilde M \to \sph^n$).  
	
	Consider the bundle 
	\[   E_{\widetilde Z} = S(T\widetilde Z)\otimes f_1^\ast S(T\sph^n) \oplus S(T\widetilde Z)\otimes (h\circ f_1)^\ast S(T\sph^n)  \]
	over $\widetilde Z$ and 
	\[   E_{\widetilde Z} = S(T\widetilde M)\otimes f_2^\ast S(T\sph^n) \oplus S(T\widetilde M)\otimes (h\circ f_2)^\ast S(T\sph^n)  \]
	over $\widetilde M$.  By extending the functions $\chi_\ell$ smoothly over $\widetilde Z$ and  $\widetilde M$ respectively, we define 
	\[ B_{\widetilde Z} = D_{\widetilde Z}+\varepsilon\sum_{\ell=1}^m\chi_\ell  U_\ell \]
	and  
		\[ B_{\widetilde M} = D_{\widetilde M}+\varepsilon\sum_{\ell=1}^m\chi_\ell  U_\ell. \]
It follows from the relative index theorem \cite{GromovLawson} that   
	$$\ind(B)=\ind(B_{\widetilde M})-\ind( B_{\widetilde Z}).$$
By a linear homotopy, we see that 
	$$\ind(B_{\widetilde Z})=\ind( D_{\widetilde  Z}+\sum_{\ell=1}^m U_\ell),$$
Note that $D_{\widetilde  Z}+\sum_{\ell=1}^m U_\ell$ is invertible, hence has zero index. 
On the other hand, since $\widetilde M$ is a closed manifold, we have 
	$$\ind(B_{\widetilde M})=\ind( D_{\widetilde M})=\deg(f)\chi(\sph^n)-\deg(hf)\chi(\sph^n).$$
	This finishes the proof.
\end{proof}

\section{Scalar curvature rigidity for punctured torus}\label{sec:tori}

In this section, we shall prove the following analogue of Theorem \ref{thm:sphere} for tori.
\begin{theorem}[Theorem \ref{thm:torus-intro}]
Let $M^n$ ($n\geq 3$) be a closed spin manifold and $S$ a finite simplicial complex embedded in $M$ of codimension $k$. Let $g$ be an $L^\infty$-metric on $M$ that is smooth outside $S$. Let $f\colon (M,g)\to (\T^n,g_{\T^n})$ be a Lipschitz map with non-zero degree. Suppose that $\Sc_{g}\geq 0$ on $M\backslash S$. If $k = \codim S  \geq\frac n 2+1$ and $$(f\circ i)_*\colon \pi_1(S)\to \pi_1(\T^n)$$ is the zero map, where $i\colon S\to M$ is the inclusion map,
then $g$ is Ricci flat on $M\backslash S$.
\end{theorem}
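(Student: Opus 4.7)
The plan is to follow the template of Theorem \ref{thm:sphere}, adapted to the torus. In the sphere case, the Dirac argument exploits the nonvanishing Euler characteristic of $\mathbb S^n$ via the wrapping map; for $\mathbb T^n$ we have $\chi(\mathbb T^n)=0$, so this is replaced by the enlargeability of $\mathbb T^n$, combined with the covering space determined by $f_\ast\colon\pi_1(S)\to\pi_1(\mathbb T^n)=0$.

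\textbf{Step 1: Reduction to strictly positive scalar curvature.} Assume for contradiction that $g$ is not Ricci flat on $M\setminus S$. If $\mathrm{Sc}_g\not\equiv 0$, the operator $\mathcal L=-\tfrac{4(n-1)}{n-2}\Delta_g+\phi(\mathrm{Sc}_g)$ has strictly positive bottom spectrum (by the Rayleigh quotient, using $\mathrm{Sc}_g>0$ somewhere and $\phi\geq 0$); applying the $L^\infty$-elliptic regularity/Harnack inequalities as in Lemma \ref{lemma:bound}, the bounded positive principal eigenfunction yields an $L^\infty$ conformal factor producing $g_1$ with $\mathrm{Sc}_{g_1}\geq\delta_1>0$ on $M\setminus S$. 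If instead $\mathrm{Sc}_g\equiv 0$ on $M\setminus S$, pick $p\in M\setminus S$ with $\mathrm{Ric}_p\neq 0$ and perturb $g$ in the direction $-\chi\mathrm{Ric}$ for a cutoff $\chi$ supported in a compact subset of $M\setminus S$; the Bianchi identity gives $\tfrac{d}{dt}\big|_0\mathrm{Sc}_{g_t}=|\chi\mathrm{Ric}|^2+(\text{terms vanishing near }p)$, so for small $t>0$ we gain $\mathrm{Sc}>0$ near $p$. A conformal fix-up (solving the Yamabe-type equation with compactly supported data) restores $\mathrm{Sc}\geq 0$ globally while preserving strict positivity somewhere, reducing to the previous case.

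\textbf{Step 2: Green function construction of a complete metric.} Apply Lemma \ref{lemma:conformaFactor} with the above $\mathcal L$ to obtain $u>0$ smooth on $M\setminus S$ with $\mathcal L u=0$ and $u(x)\sim d(x,S)^{2-\beta(x)}$ for $\beta\in[k,n]$. Set $g_\times=(1+u)^{\tfrac{4}{n-2}}g_1$ on $M_\times:=M\setminus S$. The codimension assumption $k\geq \tfrac n2+1$ gives completeness (verbatim from Proposition \ref{prop:conformal}), and the same computation shows $\mathrm{Sc}_{g_\times}\geq \delta\,(1+u)^{-\tfrac{n+2}{n-2}}>0$ pointwise.

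\textbf{Step 3: Lift along the kernel cover.} Let $\widehat{M}_\times\to M_\times$ be the covering associated to $\ker\big(f_\ast\colon\pi_1(M_\times)\to\pi_1(\mathbb T^n)\big)$; then $f$ lifts to $\widehat f\colon\widehat M_\times\to\mathbb R^n$. Because $\codim S\geq 3$, the inclusion $M\setminus S\hookrightarrow M$ induces an isomorphism on $\pi_1$, so the hypothesis $f_\ast(\pi_1(S))=0$ implies each component of $S$ lifts, and $\widehat S:=p^{-1}(S)$ is a disjoint union of $\mathbb Z^n$-many isometric copies of the components of $S$. The lifted metric $\widehat g_\times$ is complete with $\mathrm{Sc}_{\widehat g_\times}>0$.

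\textbf{Step 4: Twisted Dirac operator and contradiction.} For each $N\in\mathbb N$, let $\phi_N\colon\mathbb T^n\to\mathbb S^n$ be a degree-one map of Lipschitz constant $O(1/N)$ obtained from enlargeability (compose the $N$-fold self-cover of $\mathbb T^n$ with a degree-one collapse $\mathbb T^n\to\mathbb S^n$), and let $F\to\mathbb S^n$ be a Hermitian bundle with $c_{n/2}(F)\neq 0$. The bundle $E_N:=\phi_N^\ast F$ on $\mathbb T^n$ carries a connection with curvature $\|\mathcal R^{E_N}\|=O(N^{-2})$. Pull back to $M_\times$ and form the twisted Dirac operator
\[
B_N = D\otimes f^\ast E_N + \varepsilon\,\chi\, U
\]
where $U$ is the off-diagonal endpoint trivialization (analogous to the wrapping isometry of Section \ref{sec:sphererigid}) arising from the triviality of $f^\ast E_N$ over a neighborhood of the ends — here $f_\ast(\pi_1(S))=0$ gives precisely the trivialization of $E_N$ over $f(S)$ needed to define $U$. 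By the analog of Lemma \ref{lemma:fredholm} and the Atiyah-Singer formula, $\mathrm{ind}(B_N)=\deg(f)\cdot\langle c_{n/2}(E_N),[\mathbb T^n]\rangle\neq 0$. On the other hand, the Bochner-Lichnerowicz-Weitzenb\"ock formula gives
\[
B_N^2 \geq \nabla^\ast\nabla + \tfrac{1}{4}\mathrm{Sc}_{g_\times} - C\|\mathcal R^{E_N}\| + \varepsilon^2\chi^2 - \varepsilon|\nabla\chi|,
\]
which for large $N$ (small curvature) and suitable $\varepsilon$ is uniformly positive on all of $M_\times$, forcing $\mathrm{ind}(B_N)=0$. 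This contradiction shows $g$ must be Ricci flat on $M\setminus S$.

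\textbf{Main obstacle.} The technical heart is Step 4: ensuring that the almost-flat twisting bundle $f^\ast E_N$ admits a globally defined trivialization at the ends of $(M_\times,g_\times)$ that is compatible with the Dirac potential $\varepsilon\chi U$, so that the Fredholm/index machinery of Section \ref{sec:sphererigid} transfers verbatim. This compatibility is exactly what the hypothesis $f_\ast(\pi_1(S))=0$ provides, and it is also the reason the covering $\widehat M_\times$ in Step 3 is used only implicitly (to verify the bundle statement) rather than as the actual site of the index computation. The subsidiary difficulty is the $L^\infty$ perturbation in Step 1(b), which must be carried out in a compact subset of $M\setminus S$ and then extended to the entire $M$ while controlling the scalar curvature; this is standard but requires some care near $\partial\mathrm{supp}(\chi)$.
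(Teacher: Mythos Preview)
Your Steps 1 and 2 track the paper's argument (the paper simply cites Kazdan \cite{Kazdan82} for the reduction in Step 1, so your sketch there is in fact more detailed). The gap is in Step 4.

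There is no degree-one map $\phi_N\colon\T^n\to\sph^n$ with Lipschitz constant $O(1/N)$: by the area formula any map $\T^n\to\sph^n$ of Lipschitz constant $L$ satisfies $|\deg|\leq C\,L^n$, so $L\to 0$ forces degree zero; equivalently, Chern--Weil forbids a fixed-rank bundle on $\T^n$ with curvature tending to $0$ and fixed nonzero top Chern number. Enlargeability only produces such maps $\psi\colon\widetilde{\T}^n\to\sph^n$ on \emph{finite covers}, and the Dirac argument must then be run on the induced finite cover $\widetilde M_\times$ of $M_\times$ --- not on $M_\times$ itself, and not on the infinite kernel cover of your Step~3 (which has infinitely many ends and carries no almost-flat bundle with nontrivial index either). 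This is exactly what the paper does: pass to a finite cover, then homotope $\psi\circ\widetilde f_\times$ to a locally constant map near infinity, reducing to the finite-subset case of Corollary~\ref{coro:point&tree}.

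Once you work on $\widetilde M_\times$, the role of the hypothesis $f_\ast(\pi_1(S))=0$ becomes clear, and it is not quite what you describe. On the finite cover the preimage $\widetilde S$ is a priori a covering space of $S$; the hypothesis forces it to be a disjoint union of finitely many isometric \emph{copies} of $S$. Each copy then has diameter bounded independently of the cover, so $\psi\circ\widetilde f$ sends each end of $\widetilde M_\times$ into a ball of diameter $O(\alpha)$ in $\sph^n$, which is precisely what permits the homotopy to a constant there (and hence the definition of the potential $U$). Without the $\pi_1$ hypothesis, $\widetilde S$ could be a connected cover of $S$ with diameter growing like $\alpha^{-1}$, and its image under $\psi\circ\widetilde f$ need not shrink.
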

\begin{proof}
	We only prove the case when $n$ is even. The odd dimensional case follows by a similar argument by using spectral flow (cf. the proof of the odd dimensional case of Theorem \ref{thm:sphere}).
	
	Assume to the contrary that $g$ is not Ricci flat. By \cite[Theorem B]{Kazdan82} and Lemma \ref{lemma:bound}, there exists an $L^\infty$-metric $g_1$ on $M$ such that $g_1$ is smooth outside $S$ and  $\Sc_{g_1}\geq\delta>0$ for some $\delta>0$. Hence we may assume without loss of generality that  $\Sc_{g}\geq\delta>0$ for some $\delta>0$. In this case, we denote by $L$ the Lipschitz constant of $f\colon (M,g)\to (\T^n,g_{\T^n})$.
	
	Denote
	$$\mathcal L=-\frac{4(n-1)}{n-2}\Delta_g+\phi(\Sc_g),$$
	where $\phi\colon \R_{\geq 0}\to\R_{\geq 0}$ is a smooth non-decreasing bounded function such that
	$$\phi(s)=\begin{cases}
		s,&s\leq 10\\
		20,&s\geq 30
	\end{cases}$$
	and $\phi(x)\leq x$. Similar to Lemma \ref{lemma:conformaFactor},  there exist a positive smooth function $u$ on $M\backslash S$ and a continuous positive function $\beta\colon S\to [k, n]$ such that $\mathcal L u=0$ on $M\backslash S$
	and 
	\begin{equation*}
		\frac 1 C d(x,S)^{2-\beta(x)}\leq u(x)\leq Cd(x,S)^{2-\beta(x)}
	\end{equation*}
	for some $C>0$.
	
 We consider the conformal metric
	$$g_\times=(1+u)^{\frac{4}{n-2}}g.$$
	By Proposition \ref{prop:conformal}, $(M\backslash S,g_\times)$ is a complete metric,  provided that $k\geq n/2 +1$.
	The scalar curvature of $g_\times$ is given by
	\begin{align*}
		\Sc_{g_\times}=&(1+u)^{-\frac{n+2}{n-2}}\Big(-\frac{4(n-1)}{n-2}\Delta_g(1+ u)+\Sc_g(1+ u)\Big)\\
		=&(1+u)^{-\frac{n+2}{n-2}}\Big(-\phi(\Sc_{g})u+\Sc_g+\Sc_gu\Big)\\
			\geq &\frac{\Sc_g}{(1+u)^{\frac{n+2}{n-2}}}\geq  \frac{\delta}{(1+ u)^{\frac{n+2}{n-2}}}.
	\end{align*}
	
	Recall that the torus $\T^n$ is enlargeable \cite{GromovLawson80}, namely,  for any $\alpha>0$, there exists a finite covering space $\widetilde \T^n$ and a smooth map $\psi\colon (\widetilde \T^n,\widetilde g_{\T^n})\to(\sph^n,g_{\sph^n})$ with non-zero degree such that $\psi$ has Lipschitz constant $\leq \alpha$, where $\widetilde g_{\T^n}$ is the metric on $\widetilde\T^n$ that is lifted from $g_{\T^n}$.  Let  $\widetilde M$ is the covering space of $M$ induced by $\widetilde\T^n\to\T^n$ via the map $f$, and $\widetilde f\colon \widetilde M\to \widetilde\T^n$ be the lifted map. Let $\widetilde S$ be the preimage of $S$ in $\widetilde M$ under the covering map.  Since $ f_*\colon \pi_1(S)\to \pi_1(\T^n)$ is the zero map, $\widetilde S$ is a disjoint union of finitely many copies of $S$. It follows that each copy $S$ in $\widetilde S$ is mapped by $\psi \circ \widetilde f$ to an arbitrarily small set in $\sph^n$, provided the covering space $\widetilde \T^n$ is sufficiently large. 
 
 Let  $\widetilde M_\times$ be the covering space of $M_\times$ induced by $\widetilde \T^n\to \T^n$ via the map $f_\times$ 
 and $\widetilde f_\times\colon \widetilde M_\times\to\widetilde\T^n$ the lifted map. The metric $g_\times$ lifts to a metric  $\widetilde g_\times$   on $\widetilde M_\times$. There exists a compact set $K$ of $M_\times$, whose preimage in $\widetilde M_\times$ will be denoted by $\widetilde K$, such that  each connected component $Y_\ell$ of $\widetilde M_\times-\widetilde K$ is mapped by the map $\psi\circ \widetilde f_\times $ into a small subset in $\sph^n$ that has diameter $\leq c\alpha$, where $c>0$ is a constant independent of $\alpha$. 
	
	We shall modify the map $\psi\circ \widetilde f_\times$ to reduce the proof to a similar situation as Case (2) of Corollary \ref{coro:point&tree}. Let $r$ be a smooth function on $\widetilde M_\times$ that is a smooth approximation of the distance from $\widetilde K$. Assume without loss of generality that $r$ is a $2$-Lipschitz function. As long as  $\alpha$ is sufficiently small,  there exists a $1$-Lipschitz homotopy
	$$H_\ell\colon [0,1]\times \psi \widetilde f_\times(Y_\ell)\to\sph^n,$$
	such that $H_\ell(0,\cdot)$ is the identity map on $\widetilde M_\times-\widetilde K$, and $H_\ell(1,\cdot)$ is a constant map. 
	Let $\rho_\alpha\colon[0,\infty)\to[0,1]$ be a non-decreasing function such that $\rho(t)=0$ for $t\leq 1$, $\rho(t)=1$ for $t\geq 2\alpha^{-1}$, and $0\leq \rho'\leq\alpha$.
	Now we define
	$$F\colon\widetilde M_\times\to\sph^n \textup{ by } F(x)=\begin{cases}
	\psi \circ\widetilde f_\times(x),&\textup{ if } x\in\widetilde K,\\
	H_\ell (\rho_\alpha(r(x)),h\circ \widetilde f_\times(x)),&\textup{ if } x\in Y_l.	
	\end{cases}$$
	
	Let $K_\alpha$ be the $3\alpha$-neighborhood of $K$ in $M_\times$ and $\widetilde K_\alpha$ its lift in $\widetilde M_\times$. By construction, $F$ has non-zero degree and  is locally constant on $\widetilde M_\times-\widetilde K_\alpha$.
	\begin{claim*}
		If $\alpha$ is sufficiently small, then $\Sc_{\widetilde g_\times}>\sigma_F$, where $\sigma_F=2|\Bigwedge^2dF|_\tr$.
	\end{claim*}
	If the claim holds, then we obtain a contradiction by the same proof of Case (2) of Corollary \ref{coro:point&tree}. This will then finish the proof of the theorem. Therefore, it remains to prove the claim. 

  Note that we have 
	$$\Sc_{\widetilde g_\times}\geq\frac{\delta}{(1+\widetilde u)^{\frac{n+1}{n-2}}},$$
	where $\widetilde u$ is the lift of $u$ from $M_\times$ to  $\widetilde M_\times$. On $\widetilde M_\times-\widetilde K_\alpha$, we have $\sigma_F=0$ hence $\Sc_{\widetilde g_\times}>\sigma_F$. Now on $\widetilde K_\alpha$, we note that with respect to the original smooth metric $\widetilde g$ on $\widetilde K$, the map
	$$F\colon (\widetilde K,\widetilde g)\to (\sph^n,g_{\sph^n})$$
	has Lipschitz constant  $\leq (L+2)\alpha$, where $L$ is the Lipschitz constant of $f\colon (M, g) \to (\T^n, g_{\T^n})$. Since 
	$$\widetilde g_\times=(1+\widetilde u)^{\frac{4}{n-2}}\widetilde g\geq \widetilde g,$$
	we have
	$$\sigma_F\leq \frac{n(n-1)}{2}(L+2)^2\alpha^2.$$
	It follows that 
	$$\Sc_{\widetilde g_\times}\geq\frac{\delta}{(1+\widetilde u)^{\frac{n+1}{n-2}}}\geq\frac{\delta}{(1+\sup_{x\in K_\alpha}\{u(x)\})^{\frac{n+1}{n-2}}}$$
 on $\widetilde K_\alpha$.
	As $K_\alpha$ is compact, we see that $\Sc_g>\sigma_F$ as long as 
	$$\alpha<\frac{\sqrt\delta}{(L+2)\sqrt{\frac{n(n-1)}{2}}(1+\sup_{x\in K_\alpha}\{u(x)\})^{\frac{n+1}{2(n-2)}}}.$$
	This finishes the proof.
\end{proof}

We remark that the above theorem (and the same proof) also hold if  we replace the torus $\mathbb T^n$  by any closed enlargeable spin manifold (cf.  \cite[Definition 6.4]{GromovLawson}).

\section{Positive mass theorem for $L^\infty$ metrics}\label{sec:pmt}
In this section, we prove a positive mass theorem for complete $L^\infty$ metrics on asymptotically flat spin manifolds with arbitrary ends. 

\begin{definition}\label{def:af}
Let $M$ be a smooth manifold with no boundary equipped with an $L^\infty$ Riemannian metric $g$. We say $(M, g)$ is a complete asymptotically flat manifold with arbitrary end if 
	\begin{enumerate}
		\item there exists a compact set $K\subset M$ such that $g$ is smooth on $M\backslash K$, 
		\item there is a distinguished end $\mathcal E$ such that $\mathcal E$ is diffeomorphic to $\R^n\backslash B_R(0)$ for some $R>0$, and under this diffeomorphism the smooth metric $g$ satisfies
		$$g=g_{\R^n}+O(\rho^{2-n}),~\Sc_g=O(\rho^{-q}),~|(\nabla^{\R^n})^ig|=O(\rho^{2-n-i}),$$
		for $i=1,2$, where $\rho$ is the Euclidean distance function.
	\end{enumerate}
\end{definition}

When $(M,g)$ is a complete asymptotically flat manifold with arbitrary ends. The ADM mass (of the distinguished end $\mathcal E$) is defined as follows.
\begin{definition}
 The mass $m_g$ (of the distinguished end $\mathcal E$) of $(M, g)$ is defined by
	$$m_g=\lim_{R\to\infty}\frac{1}{\omega_n}\int_{S_R}\sum_{i,j}(\partial_i g_{ji}-\partial_j g_{ii})\star dx_j,$$
	where $\{x_i\}$ is the coordinate of $\R^n$, $\star$ is the Hodge star operator on Euclidean space, and $S_R$ the sphere of radius $R$ in $\R^n$ with $\omega_n$ its volume.
	\end{definition}

The main theorem of this section is as follows.
\begin{theorem}
	Let $(M^n,g)$ be a complete asymptotically flat spin manifold  with arbitrary ends, and $S$ a finite simplicial complex embedded in $M$ of codimension $k\geq \frac{n}{2} + 1$. Suppose that $g$ is smooth outside $S$. If $\Sc_g\geq 0$, then $m_g\geq 0$. Furthermore, if $m_g=0$, then $g$ is Ricci flat.
\end{theorem}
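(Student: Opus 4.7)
The plan is to establish the positive mass theorem by reducing it to Theorem \ref{thm:torus-intro} via a Lohkamp-type compactification and contradiction argument. Suppose for contradiction that $m_g<0$. I first apply a compactly supported Lohkamp deformation in the smooth asymptotic region of the distinguished end $\mathcal E$, producing an $L^\infty$ metric $\bar g$ on $M$ that agrees with $g$ outside a compact subset of $\mathcal E$, equals the Euclidean metric on $\mathcal E\setminus B_R$ for some large $R$, is smooth outside the original singular set $S$, and satisfies $\Sc_{\bar g}\geq 0$ with $\Sc_{\bar g}>0$ on a set of positive measure. Since the modification occurs entirely in the smooth asymptotic region, the $L^\infty$ regularity and the singular set $S$ are preserved.

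Next I convert $(M,\bar g)$ into a closed spin manifold fitting the hypotheses of Theorem \ref{thm:torus-intro}. For the distinguished end, choose $L\gg R$, identify the Euclidean region $\mathcal E\setminus B_R$ with an open subset of the flat torus $\T^n_L=\R^n/L\Z^n$, truncate beyond a large cube, and glue in the complementary region of $\T^n_L$, producing a Lipschitz degree-one map to $\T^n_L$ on this capped region. For each non-distinguished end $\mathcal E_j$, I conformally blow down via $u_j^{4/(n-2)}\bar g$, where $u_j$ is a positive decaying-at-infinity solution of the conformal Laplace equation on a neighborhood of $\mathcal E_j$, built via a Green's function procedure analogous to Lemma \ref{lemma:conformaFactor}; this collapses $\mathcal E_j$ to a single point $p_j$ of codimension $n$. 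The resulting closed spin manifold $(\widehat M,\widehat g)$ carries an $L^\infty$ metric smooth outside $\widehat S=S\cup\{p_1,\ldots,p_\ell\}$, with $\codim\widehat S\geq\frac{n}{2}+1$, $\Sc_{\widehat g}\geq 0$, and $\Sc_{\widehat g}>0$ on a set of positive measure. The degree-one Lipschitz map $\widehat f\colon\widehat M\to\T^n_L$ can be arranged so that $\widehat S$ is mapped into a small contractible disk of $\T^n_L$, whence $\widehat f_*\colon\pi_1(\widehat S)\to\pi_1(\T^n_L)$ vanishes. Theorem \ref{thm:torus-intro} then forces $\widehat g$ to be Ricci flat, contradicting the strict positivity of $\Sc_{\widehat g}$ somewhere; hence $m_g\geq 0$. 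For the equality case $m_g=0$, if $g$ were not Ricci flat, a Kazdan--Warner-type perturbation (as in the proof of Lemma \ref{lemma:bound}) would produce a nearby $L^\infty$ metric with $\Sc>0$ on a set of positive measure while changing the mass by an arbitrarily small amount, yielding a metric of negative mass and contradicting the non-negativity just proved.

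The main obstacle is the compactification of the arbitrary ends in a way that produces a singular set of the required codimension while preserving both $\Sc\geq 0$ and the $L^\infty$ structure. The Green's function construction of Lemma \ref{lemma:conformaFactor} provides the local template for the behavior of $\widehat g$ near each collapsed point $p_j$, but since the arbitrary ends are non-compact and carry no prescribed asymptotic geometry, constructing $u_j$ with the correct decay rates and ensuring $\widehat g$ extends as an $L^\infty$ metric across $p_j$ requires careful global PDE analysis on each $\mathcal E_j$. Once this compactification is accomplished, the remainder is a direct invocation of Theorem \ref{thm:torus-intro} combined with the standard Lohkamp and Kazdan--Warner techniques.
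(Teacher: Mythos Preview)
Your overall strategy---Lohkamp deformation on the distinguished end, torus gluing, and reduction to Theorem~\ref{thm:torus-intro}---matches the paper's. However, your treatment of the \emph{arbitrary} (non-distinguished) ends contains a genuine gap.

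You propose to conformally collapse each arbitrary end $\mathcal E_j$ to a point $p_j$ via a factor $u_j^{4/(n-2)}$ so that the resulting metric $\widehat g$ is $L^\infty$ across $p_j$. For $\widehat g$ to be $L^\infty$ near $p_j$, the conformally rescaled metric must be uniformly comparable to a smooth background metric on a punctured ball; in particular, the end $\mathcal E_j$ would have to be conformally equivalent, with two-sided $L^\infty$ bounds on the conformal factor, to a punctured Euclidean ball. This forces very specific asymptotics on $\mathcal E_j$---essentially a one-point conformal compactification---which is simply false for a general complete end with no prescribed geometry at infinity (think of a hyperbolic cusp, or an end of large volume growth that is not asymptotically conical). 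The Green's function template from Lemma~\ref{lemma:conformaFactor} does not help here: that lemma lives on a \emph{compact} manifold and controls behavior near a compact singular set, not at infinity on an arbitrary complete end. You correctly flag this step as the main obstacle, but it is not a technicality to be filled in; in general it cannot be done.

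The paper sidesteps this entirely. After the Lohkamp step and the torus gluing on the distinguished end, it obtains a \emph{complete non-compact} spin manifold $\overbar M$ (the arbitrary ends are left intact) together with a degree-one map $f\colon \overbar M\to\T^n$ crushing everything outside the torus region to a point. It then invokes not Theorem~\ref{thm:torus-intro} as stated, but its \emph{proof}: the Dirac-operator-with-potential argument of Section~\ref{sec:sphererigid} works on complete manifolds once one introduces the auxiliary potentials $\varepsilon\sum_\ell\chi_\ell U_\ell$ along each end (exactly as in equation~\eqref{eq:diracpotential}). Thus no compactification of the arbitrary ends is needed. For the rigidity case $m_g=0$, the paper appeals directly to Schoen's conformal argument; your Kazdan--Warner sketch points in the right direction but omits the step where strict positivity of scalar curvature is converted, via the conformal Laplacian, into an actual drop of the mass.
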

\begin{proof}
	Assume first $m_g<0$. By the same argument of \cite[Proposition 3.2]{MR4597224}, for any $\varepsilon>0$, there exists  $L^\infty$-metric $g_1$ on $M$ which is smooth outside $S$, such that 
	\begin{itemize}
		\item $\|g_1-g\|_{L^\infty,g}<\varepsilon$,
		\item $\Sc_{g_1}\geq 0$,
		\item $g_1$ is asymptotically flat on on the distinguished end $\mathcal E$, 
		\item the ADM mass of $g_1$ (on the distinguished end $\mathcal E$) is negative, and
		\item $g_1$ is conformally Euclidean outside a compact set, namely
		$$g_1=u^{\frac{4}{n-2}}g_{\R^n},$$
		where $u$ at infinity has the expansion 
		$$u=1+\frac{m_{g_1}}{2\rho^{n-2}}+O(\rho^{1-n}).$$
	\end{itemize}
	In fact, by the standard elliptic theory for Laplacian with bounded coefficients \cite[Theorem 8.6 and Theorem 8.22]{MR1814364}, the Laplacian equation with $L^\infty$-coefficients is still solvable and has solutions in $C^\infty_{\mathrm{loc}}\cap C^\alpha$. Hence the same exhaustion argument in \cite[Proposition 2.2]{MR4597224} still holds. See also \cite[Section 4]{MR994021} and \cite{Bartnik}.
	
	By following Lohkamp's argument \cite{MR1678604}, we modify the function $u$ above on the distinguished end $\mathcal E$ and  obtain a new metric $g_2$ on $M$, so that $\Sc_{g_2}\geq 0$, $\Sc_{g_2}(x) >0$ for some $x\in \mathcal E$,  and $g_2=g_{\R^n}$ on   $\mathcal E\backslash \mathcal K$, where $\mathcal K$ is a precompact subset of $\mathcal E$. Now choose a sufficiently large  sphere $\sph^n_r$ in $\mathcal E\backslash \mathcal K$ for some radius $r>0$. Denote the part of $\mathcal E\backslash \mathcal K$ that is outside of  $\sph^n_r$ by $\mathcal O$. Then $\mathcal O$ is isometric to $(\mathbb R^n\backslash B_r(0), g_{\mathbb R^n})$. Now  consider a large flat torus $\mathbb T^n$ such that $B_r(0)$ isometrically embeds into $\mathbb T^n$. We replace the ball $B_r(0)$ by the set $M\backslash \mathcal O$, and obtain a complete  spin Riemannian manifold $(\overbar M,\overbar g)$, where $\overbar g$ is $L^\infty$ and smooth outside $S\subset \overbar M$ and $\Sc_{\overbar g}\geq 0$. Note that $\overbar M$ is a connected sum of a complete spin Riemannian manifold with a torus, hence enlargeable in the sense of \cite[Definition 6.4]{GromovLawson}. In fact, consider the obvious map $f\colon \overbar M \to \mathbb T^n$ that crashes $M\backslash \mathcal O$ to a point. Clearly, $f$ is a Lipschitz map of non-zero degree that maps   $\pi_1(S)$ to zero in $\pi_1(\mathbb T^n)$.   By applying Theorem \ref{thm:torus-intro} (or rather its proof), we arrive at a contradiction. We remark that, although  Theorem \ref{thm:torus-intro} is only stated for the case where $\overbar M$ is a closed manifold, the same proof in fact applies to the current case of complete manifolds, by introducing auxiliary potentials along various  ends as in the proof of Theorem \ref{thm:sphere}.  To summarize, we have shown that $m_g\geq 0$. 
	
	Now suppose $m_g=0$ but $g$ is not Ricci-flat, then  the same argument from  \cite[Lemma 3]{MR788292} would lead us to a contradiction. This completes the proof. 
\end{proof}
\appendix

\section{Wrapping property for finite sets}\label{app:wrapping}

In this appendix, we show that any finite subset of $\sph^n$ satisfies the wrapping property, cf. \cite{Xie:2021tm}.

\begin{proposition}\label{prop:wrapfin}
	If $\Sigma$ is a finite subset of $\sph^n$, then $\Sigma$ satisfies the wrapping property. 
\end{proposition}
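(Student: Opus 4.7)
The plan is to exhibit an explicit wrapping map. The key observation is that since $\Sigma$ is finite, every connected component of $\Sigma$ is a single point. Thus condition $(2)$ of Definition \ref{def:wrapping}---that $h$ restricted to each component equals an isometry in $\SO(n+1)$---reduces to the pointwise statement that for each $p \in \Sigma$ there exists $g_p \in \SO(n+1)$ with $g_p(p) = h(p)$. Since $\SO(n+1)$ acts transitively on $\sph^n$ for $n \geq 1$, this is automatic. Hence the proposition reduces to constructing any $1$-Lipschitz self-map of $\sph^n$ whose degree is not $1$.

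To produce such a map, I would first choose an equatorial $(n-1)$-sphere $E \subset \sph^n$ disjoint from the finite set $\Sigma$; this is possible because $\Sigma$ is finite while the space of equators has positive dimension. After rotating coordinates we may assume $E = \{x_1 = 0\}$, and then the folding map
\[ h(x_1, x_2, \ldots, x_{n+1}) \coloneqq (|x_1|,\, x_2, \ldots, x_{n+1}) \]
from the proof of Lemma \ref{lemma:folding} is $1$-Lipschitz with image contained in the closed hemisphere $\{x_1 \geq 0\}$, and therefore has degree $0$. This verifies condition $(1)$ and, combined with the preceding paragraph, yields the required witness.

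The only conceptual hurdle---the ``main obstacle'' here, such as it is---is parsing the definition correctly: once one recognizes the components of a finite set as singletons, condition $(2)$ becomes vacuous modulo the transitivity of $\SO(n+1)$, and no hemisphere hypothesis on $\Sigma$ is needed. The authors' remark that the argument ``is elementary but a bit tedious'' and proceeds by iterating wrapping maps likely refers to a more pictorial construction in \cite{Xie:2021tm}---for instance, applying the hemisphere folding of Lemma \ref{lemma:folding} twice, first to push $\Sigma$ into an open hemisphere across an equator avoiding $\Sigma$ and then to fold that hemisphere---but either approach furnishes a $1$-Lipschitz map of degree $0$ meeting both requirements of Definition \ref{def:wrapping}.
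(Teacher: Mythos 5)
Your proposal hinges on reading condition $(2)$ of Definition \ref{def:wrapping} as a value-only condition at each point, which for singletons becomes vacuous by transitivity of $\SO(n+1)$. That is not the meaning the paper uses, and it is precisely where the gap lies. In the proof of Theorem \ref{thm:sphere} the hypothesis is invoked in the form ``$h$ is an isometry in $\SO(n+1)$ on each connected component of $f(S)$'': what is needed is that $h$ coincides, near each component of $\Sigma$, with a fixed \emph{orientation-preserving} isometry $A\in\SO(n+1)$, so that the spin lift of $A$ furnishes the parallel, grading-preserving bundle isometry $V_\ell\colon E_1\to E_2$ over the ends $Y_\ell$ and extends across the glued-in neighborhood $\mathcal N$ in the relative index computation of Lemma \ref{lemma:fredholm} (this is what makes $D_{\widetilde Z}+\sum_\ell U_\ell$ invertible and kills $\ind(B_{\widetilde Z})$). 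Under that reading the proposition is far from vacuous, and your proposed witness fails it: the single fold $h(x_1,\dots,x_{n+1})=(|x_1|,x_2,\dots,x_{n+1})$ is, on a neighborhood of every point of $\Sigma$ lying in the reflected hemisphere $\{x_1<0\}$, equal to the reflection across the equator, which lies in $O(n+1)\setminus\SO(n+1)$; knowing merely that some rotation happens to send $p$ to $h(p)$ does not produce the canonical $V_\ell$ attached to $h$, nor does it control the orientation (grading) behavior that the index argument requires. Your reading would also make \emph{every} totally disconnected closed subset satisfy the wrapping property via a constant map, which would render both the definition and this proposition pointless.

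The closing suggestion of ``applying the folding of Lemma \ref{lemma:folding} twice'' does not repair this, because it does no orientation bookkeeping: if the second equator avoids the image of $\Sigma$, then near each image point the second fold is either the identity or a reflection, so the composite near a given point of $\Sigma$ may still be a single reflection. Arranging that the total composite is, near \emph{every} point of $\Sigma$, a composition of an even number of reflections (hence an element of $\SO(n+1)$), while remaining $1$-Lipschitz, injective near $\Sigma$, and of degree $\ne 1$, is exactly the content of the paper's proof: it introduces the orientation indicator $\omega$, chooses equators avoiding $\Sigma$ with no symmetric pairs, and uses ``double wrapping'' maps (two successive folds, whose composite is a rotation near the relevant points) to increase, step by step, the number of points at which the composite is locally orientation-preserving. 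Your argument skips this entire mechanism, so it proves only the trivialized statement and not the proposition as it is actually used in the paper.
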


\begin{proof}
	Consider the canonical embedding of the unit sphere $\sph^n$ inside the Euclidean space $\mathbb R^{n+1}$. Since $\Sigma$ is finite, there exists an vector $v$ in $\mathbb R^{n+1}$ such  that 
	\begin{enumerate}
		\item $\langle v, x\rangle \neq 0$ for all $x\in \Sigma$, that is, $v$ is not orthogonal to any vector $x\in \Sigma$;
		\item and $v\nparallel (x-y)$ for all $x, y\in \Sigma$ with $x\neq y$, that is, $v$ is not parallel to any vector $(x-y)$ for all $x, y\in \Sigma$ with $x\neq y$, where $(x-y)$ is viewed as a vector in $ \mathbb R^{n+1}$. 
	\end{enumerate} 
	It follows that there exists an equator $\equator_1$ such that $\equator_1 \cap \Sigma = \varnothing$ and no pair of points in $\Sigma$ are symmetric\footnote{Here we say two points $x_1$ and $x_2$ of $\Sigma$ are symmetric along an equator $\equator$ if the reflection map along $\equator$ takes $x_1$ to $x_2$. } along $\equator_1$. 
	
	Let $\Phi_{\equator_1}$ be the wrapping map along $\equator_1$ as defined in Lemma \ref{lemma:folding}. In particular, due to the above properties (a) and (b) of $\equator_1$, the map $\Phi_{\equator_1}$ is injective on $\Sigma$, hence also injective on a small neighborhood of $\Sigma$. By the construction of $\Phi_{\equator_1}$, on a sufficiently small neighborhood of each point $x\in \Sigma$, the map  $\Phi_{\equator_1}$ coincides with either the identity map or the reflection map along $\equator_1$. Let us introduce an orientation-indicator function $\omega_{1}\colon \Sigma \to \{\pm1\}$ by setting 
	\[ \omega_{1}(x) = \begin{cases}
		+1 & \textup{ if $\Phi_{\equator_1}$ is orientation-preserving on a small neighborhood of $x$,} \\
		-1 & \textup{ if $\Phi_{\equator_1}$ is orientation-reserving on a small neighborhood of $x$.}
	\end{cases} \]
	A point $x\in \Sigma$ with $\omega_1(x) = 1$ will be called a $\omega_1$-positive point, and a point $x\in \Sigma$ with $\omega_1(x) = -1$ will be called a $\omega_1$-negative point. The same terminology also applies to points in $\Phi_{\equator_1}(\Sigma)$. That is, a point $y\in \Phi_{\equator_1}(\Sigma)$ will be called a $\omega_1$-positive (reps. $\omega_1$-negative)  point if $y$ is the image of a $\omega_1$-positive (reps. $\omega_1$-negative) point of $\Sigma$.  This completes the initial step of our mathematical induction. 
	
	After applying $\Phi_{\equator_1}$, the image $\Phi_{\equator_1}(\sph^n)$ lies in a hemisphere. If there are no $\omega_1$-negative points in $\Phi_{\equator_1}(\Sigma)$, then the induction process ends and the proof is completed.   Now suppose the set of $\omega_1$-negative points in $\Phi_{\equator_1}(\Sigma)$ is nonempty.  Let $z_1\in \Phi_{\equator_1}(\Sigma)$ be a closest point to $\equator_1$ among all $\omega_1$-negative points in  $\Phi_{\equator_1}(\Sigma)$, that is, $z_1$ is a $\omega_1$-negative point and 
	\[ \dist(z_1, \equator_1) = \inf_{\substack{y\in \Phi_{\equator_1}(\Sigma) \\ y \textup{ is $\omega_1$-negative}} } \dist(y, \equator_1). \]
	There could be more than one such $z_1$. We simply choose one of them. 
	\begin{claim*}
		There exists an equator $\equator_2$ such that no pair of points in  $\Phi_{\equator_1}(\Sigma)$ are symmetric along $\equator_2$  and furthermore  the  hemispheres $\sph_{\equator_2,\pm}$ associated to $\equator_2$ satisfy the following condition: $z_1$ is contained in $\sph_{\equator_2,-}$ and is the only $\omega_1$-negative point in $\Phi_{\equator_1}(\Sigma)$ that is contained in $\sph_{\equator_2,-}$
	\end{claim*}
	We can find such an equator $\equator_2$ as follows. Denote by $\sph_{\equator_1, \pm}$ the hemispheres determined by $\equator_1$. Say, $\Phi_{\equator_1}(\Sigma)$ is contained in the hemisphere $\sph_{\equator_1, +}$. Let $a_0$ be the center of $\sph_{\equator_1, +}$. If $z_1 = a_0$, then  the set $\Phi_{\equator_1}(\Sigma)$ has one and only one $\omega_1$-negative point, which is $z_1$ itself,  since $a_0$ is the unique point in the hemisphere $\sph_{\equator_1, +}$ to achieve $\dist(a_0, \equator_1) = \frac{\pi}{2}$. Then the existence of an equator $\equator_2$ with the required properties is obvious in this case.  
	
	So without loss of generality, we assume $z_1 \neq a_0$. Let $\gamma$ be the unique geodesic starting at $a_0$, passing through $z_1$ and ending at a point of $\equator_1$. Denote by $v_{z_1}$ the unit tangent vector of the curve $\gamma$ at $z_1$, which is also naturally viewed as a vector in $\mathbb R^{n+1}$. Let $\equator_{z_1}$ be the unique equator that is orthogonal to $v_{z_1}$. Note that $\equator_{z_1}$ passes through the point $z_1$,  and  the two equators $\equator_1$ and $\equator_{z_1}$ are not orthogonal since $z_1\neq a_0$. Let $Q$ be an acute (open) quadrant determined by $\equator_1$ and $\equator_{z_1}$. Then we have 
	\[ \dist(q, \equator_1) < \dist(z_1, \equator_1) \] 
	for all points $q\in Q$. Consequently, we see that $Q$ does not contain any 
	$\omega_1$-negative points of $\Phi_{\equator_1}(\Sigma)$. Now the desired equator $\equator_2$ is obtained by  rotating $\equator_{z_1}$ by a small amount along the geodesic $\gamma$. More precisely, choose a point $y_1\in \gamma$ that is sufficiently close to $z_1$ such that $\dist(y_1, a_0) < \dist(z_1, a_0)$. Let $v_{y_1}$ be the tangent vector of $\gamma$ at $y_1$. Then we can choose $\equator_2$ to be the unique equator that is orthogonal to  $v_{y_1}$ for some $y_1$ that is sufficiently close to $z_1$.  This finishes the proof of the claim.    
	
	Now we return to the induction process. If the hemisphere $\sph_{\equator_2,-}$ contains some $\omega_1$-positive points of $\Phi_{\equator_1}(\Sigma)$, then we  shall first apply a ``double-wrapping" procedure to reduce it to the case where $\sph_{\equator_2,-}$ contains no  $\omega_1$-positive points of $\Phi_{\equator_1}(\Sigma)$, that is, to the case where $\sph_{\equator_2,-}\cap \Phi_{\equator_1}(\Sigma) = \{z_1\}$. 
	
	More precisely, let $\beta$ be the unique geodesic that minimizes the distance between $z_1$ and $\equator_2$.  Extend the geodesic $\beta$ to meet the equator $\equator_1$, and denote this extended geodesic still by $\beta$. Re-parameterize\footnote{The curve $\beta$ may not have unit speed any longer after such a re-parameterization. But this will not affect our discussion.} $\beta$ so that its domain becomes $[1, 2]$, and  $\beta(1) \in \equator_1$ and $\beta(2) \in \equator_2$.   For every $t\in [1, 2]$, let $v_t$ be the tangent vector of $\beta$ at $\beta(t)$. Let $\equator_{t}$ be the unique equator that is orthogonal to $v_t$.   
	
	Let $P_{z_1}$ be the set of $\omega_1$-positive points in $\Phi_{\equator_1}(\Sigma)$ that are contained in $\sph_{\equator_2, -}$. Define a level map $L\colon P_{z_1} \to [1, 2]$ by setting $L(x) = t $ if $x$ is contained in $\equator_t$. Denote the values in $L(P_{z_1})$ by $\{t_j\}_{0\leq j \leq N}$ with $t_0\leq t_1 \leq t_2\leq \cdots$. Fix a number $\delta >0$ that is very small compared to the differences $(t_{j+1} - t_{j})$ for all $0\leq j\leq N$. Let us denote by $P_{z_1}(t_j) = P_{z_1}\cap E_{t_j}$ the intersection of $P_{z_1}$ and $E_{t_j}$. 
	
	Let $s_1 = t_0 + \frac{t_1 - t_0}{3}$ and $\Phi_{s_1}$ a wrapping map along the equator $\equator_{s_1}$ such that $\Phi_{s_1}$ equals the reflection map along $\equator_{s_1}$ on small neighborhoods of elements $x\in P_{z_1}(t_0)$ and   $\Phi_{s_1}$ equals the identity map on small neighborhoods of elements $x\in P_{z_1}(t_j)$ for $j\geq 1$. In particular, for $x\in P_{z_1}(t_0)$, its image $\Phi_{s_1}(x)$ under the map $\Phi_{s_1}$ lies in $\equator_{r_1}$, where $r_1 = t_0 + \frac{2(t_1 - t_0)}{3}$. Now set\footnote{Here the positive number $\delta$ is added to make sure the wrapping map $\Phi_{s_2}$ remains injective on $\Sigma$. } 
	\[ s_2 =  t_0 + \frac{2(t_1 - t_0)}{3} +  \frac{t_1 - t_0}{6}+ \delta.\] Let $\Phi_{s_2}$ be a wrapping map along the equator $\equator_{s_2}$ such that $\Phi_{s_2}$ equals the reflection map along $\equator_{s_2}$ on small neighborhoods of elements $x\in \Phi_{s_1}(P_{z_1}(t_0))$ and   $\Phi_{s_2}$ equals the identity map on small neighborhoods of elements  $x\in \Phi_{s_1}(P_{z_1}(t_j))$ for $j\geq 1$. In particular, since the composition of any two reflections is an element of $\SO(n+1)$, it follows that the composition $\Phi_{s_2}\circ \Phi_{s_1}$ equals an element of $\SO(n+1)$ on small neighborhoods of elements $x\in P_{z_1}(t_0)$, and equals the identity map on small neighborhoods of elements  $x\in P_{z_1}(t_j))$ for $j\geq 1$. Furthermore, the levels of  ${\Phi_{s_2}\circ \Phi_{s_1} (P_{z_1}(t_0))}$ and $\Phi_{s_2}\circ \Phi_{s_1} (P_{z_1}(t_1))$ are very close. More precisely,  
	\[  L(x_0) - L(x_1) = 2\delta \]
	for all $x_0\in \Phi_{s_2}\circ \Phi_{s_1} (P_{z_1}(t_0))$ and $x_1\in \Phi_{s_2}\circ \Phi_{s_1} (P_{z_1}(t_1)) = P_{z_1}(t_1)$. We shall call the composition $\Phi_{s_2}\circ \Phi_{s_1}$ a doubling wrapping map. Roughly speaking, a double wrapping map brings the points of $P_{z_1}$ closer to $z_1$, while preserving the orientation at those points. Now it is not difficult to see that there is a finite sequence of double wrapping maps $\Phi_{s_1}, \cdots, \Phi_{s_k}$ such that 
	\begin{enumerate}
		\item on a small neighborhood of each point in $P_{z_1}$, the composition 
		\[ {\widetilde \Phi  \coloneqq \Phi_{s_k}\circ \cdots \circ \Phi_{s_1}} \textup{ equals an element of $\SO(n+1)$},\]
		\item  $\widetilde \Phi$ equals the identity map on a small neighborhood of $z_1$,  
		\item  $\widetilde \Phi$ moves all points in $P_{z_1}$ past $z_1$, that is, 
		\[ L(\widetilde \Phi(x)) > L(z_1) \]
		for all $x\in P_{z_1}$, where $L$ is the level map from above. 
	\end{enumerate}
	Therefore, we are reduced to the case where  $\sph_{\equator_2,-}$ contains no $\omega_1$-positive points of $\Phi_{\equator_1}(\Sigma)$.  In this case,  we define  $\Phi_{\equator_2}$ to be a wrapping map along $\equator_2$  such that $\Phi_{\equator_2}$ equals the reflection map along $\equator_2$ on a small neighborhood of $z_1$, and $\Phi_{\equator_2}$ equals the identity map on small neighborhoods of the remaining points of $\Phi_{\equator_1}(\Sigma) \backslash\{z_1\}$. Let  
	\[ \Phi_{2} = \Phi_{\equator_2}\circ \Phi_{\equator_1}\] be the composition of $\Phi_{\equator_2}$ and $\Phi_{\equator_1}$. We define its associated orientation-indicator function $\omega_{2}\colon \Sigma \to \{\pm1\}$ by setting 
	\[ \omega_{2}(x) = \begin{cases}
		+1 & \textup{ if $\Phi_{2}$ is orientation-preserving on a small neighborhood of $x$,} \\
		-1 & \textup{ if $\Phi_{2}$ is orientation-reserving on a small neighborhood of $x$.}
	\end{cases} \]
	In particular,  we have 
	\[  \sum_{x\in \Sigma} \omega_2(x)  = \sum_{x\in \Sigma} \omega_1(x) + 2 > \sum_{x\in \Sigma} \omega_1(x). \]
	In other words, the total number of $\omega$-positive points is strictly increasing.  Since $\Sigma$ is a finite set, every point of $\Sigma$ will eventually become $\omega$-positive within   finitely many steps. Then the composition of all the wrapping maps appearing in these inductive steps gives the map $\Phi\colon \sph^n \to \sph^n$ with the desired properties.  This finishes the proof. 	
\end{proof}

\end{document}